\newlist{steps}{enumerate}{1}
\setlist[steps, 1]{label = Step \arabic*:}
\DeclareRobustCommand\widecheck[1]{{\mathpalette\@widecheck{#1}}}
\def\@widecheck#1#2{%
   \setbox\z@\hbox{\m@th$#1#2$}%
   \setbox\tw@\hbox{\m@th$#1%
      {%
         \vrule\@width\z@\@height\ht\z@
         \vrule\@height\z@\@width\wd\z@}$}%
   \dp\tw@-\ht\z@
   \@tempdima\ht\z@ \advance\@tempdima2\ht\tw@ \divide\@tempdima\thr@@
   \setbox\tw@\hbox{%
      \raise\@tempdima\hbox{\scalebox{1}[-1]{\lower\@tempdima\box\tw@}}}%
   {\ooalign{\box\tw@ \cr \box\z@}}}
\theoremstyle{plain}
\newtheorem{thm}{Theorem}[section]
\crefname{thm}{Theorem}{Theorems}
\Crefname{thm}{Theorem}{Theorems}
\newtheorem{prop}[thm]{Proposition}
\crefname{prop}{Proposition}{Propositions}
\Crefname{prop}{Proposition}{Propositions}
\newtheorem{lem}[thm]{Lemma}
\crefname{lem}{Lemma}{Lemmas}
\Crefname{lem}{Lemma}{Lemmas}
\newtheorem{cor}[thm]{Corollary}
\crefname{cor}{Corollary}{Corollaries}
\Crefname{cor}{Corollary}{Corollaries}
\crefname{claim}{Claim}{Claims}
\Crefname{claim}{Claim}{Claims}
\crefname{property}{Property}{Properties}
\Crefname{property}{Property}{Properties}
\crefname{problem}{Problem}{Problems}
\Crefname{problem}{Problem}{Problems}
\crefname{conjecture}{Conjecture}{Conjecture}
\Crefname{conjecture}{Conjecture}{Conjecture}
\theoremstyle{definition}
\newtheorem{defn}[thm]{Definition}
\crefname{defn}{Definition}{Definitions}
\Crefname{defn}{Definition}{Definitions}
\crefname{notation}{Notation}{Notations}
\Crefname{notation}{Notation}{Notations}
\crefname{convention}{Convention}{Conventions}
\Crefname{convention}{Convention}{Conventions}
\crefname{cond}{Condition}{Conditions}
\Crefname{cond}{Condition}{Conditions}
\crefname{assum}{Assumption}{Assumptions}
\Crefname{assum}{Assumption}{Assumptions}
\crefname{conj}{Conjecture}{Conjectures}
\Crefname{conj}{Conjecture}{Conjectures}
\crefname{claim1}{Claim}{Claims}
\Crefname{claim1}{Claim}{Claims}
\Crefname{ques}{Question}{Question}
\crefname{que}{Question}{Question}
\Crefname{que}{Question}{Question}
\theoremstyle{remark}
\newtheorem{rem}[thm]{Remark}
\crefname{rem}{Remark}{Remarks}
\Crefname{rem}{Remark}{Remarks}
\crefname{ex}{Example}{Examples}
\Crefname{ex}{Example}{Examples}
\crefname{section}{Section}{Sections}
\Crefname{section}{Section}{Sections}
\crefname{subsection}{Subsection}{Subsections}
\Crefname{subsection}{Subsection}{Subsections}
\crefname{figure}{Figure}{Figures}
\Crefname{figure}{Figure}{Figures}
\newcommand{\Z}{\mathbb{Z}}
\newcommand{\mbar}[1]{{\ooalign{\hfil#1\hfil\crcr\raise.167ex\hbox{--}}}}
\newcommand*{\QEDB}{\null\nobreak\hfill\ensuremath{\square}}%
    \def\HMt{%
       \setbox0=\hbox{$\widehat{\mathit{HM}}$}
       \setbox1=\hbox{$\mathit{HM}$}
       \dimen0=1.1\ht0
       \advance\dimen0 by 1.17\ht1
       \smash{\mskip2mu\raise\dimen0\rlap{%
          \begin{turn}{180}
              {$\widehat{\phantom{\mathit{HM}}}$}
           \end{turn}} \mskip-2mu    
                \mathit{HM}
                    }{\vphantom{\widehat{\mathit{HM}}}}{}}
\title{Smooth Concordance of Cables of the Figure-Eight Knot}
\author{Sungkyung Kang}
\address{Mathematical Institute, University of Oxford, United Kingdom}
\email{sungkyung38@icloud.com}
\author{JungHwan Park}
\address{Department of Mathematical Sciences, KAIST, Republic of Korea}
\email{jungpark0817@kaist.ac.kr}
\author{Masaki Taniguchi} 
\address{Department of Mathematics, Kyoto University, Japan}
\email{taniguchi.masaki.7m@kyoto-u.ac.jp}
\begin{document}

\begin{abstract}
We prove that every nontrivial cable of the figure-eight knot has infinite order in the smooth knot concordance group. Our main contribution is a uniform proof that applies to all $(2n,1)$-cables of the figure-eight knot. To this end, we introduce a family of  concordance invariants $\kappa_R^{(k)}$, defined via $2^k$-fold branched covers and real Seiberg--Witten Floer $K$-theory. These invariants generalize the real $K$-theoretic Fr\o yshov invariant developed by Konno, Miyazawa, and Taniguchi.
\end{abstract}

\maketitle

\section{Introduction}

A knot in $S^3$ is called \textit{slice} if it bounds a smooth, properly embedded disk in $B^4$.
Moreover, if the radial function restricted to the disk has no local maxima, the knot is called \textit{ribbon}.
By definition, every ribbon knot is necessarily slice. Whether the converse holds remains a well-known open problem, first posed by Fox~\cite[Problem~25]{Fox:1962-1}, and commonly referred to as the \textit{slice--ribbon conjecture}.

One of the most well-known potential counterexamples is constructed using the figure-eight knot as follows.  
Recall that a knot in a homology sphere is called \textit{homotopically ribbon} if it bounds a smooth disk in a homology ball, where the inclusion of the knot complement into the disk complement induces a surjective map on the fundamental group.  
Note that every ribbon knot is homotopically ribbon.
The seminal work of Casson and Gordon~\cite[Theorem~5.1]{CG83} characterizes fibered knots in homology spheres that are homotopically ribbon by examining their closed monodromy.  
This characterization was later used by Miyazaki~\cite[Theorem~8.6]{Miyazaki:1994-1} to show that many fibered knots are not ribbon, potentially providing counterexamples to the slice--ribbon conjecture.  
Among these, one of the most well-known families arises from finite connected sums of nontrivial cables of the figure-eight knot.

The cables of the figure-eight knot have attracted considerable attention, not only because it is one of the simplest examples studied by Miyazaki, but also because of its subtle behavior in four-dimensional topology. 
In particular, Cochran, following earlier work of Fintushel and Stern~\cite{Fintushel-Stern:1984-1}, observed that the figure-eight knot and all of its $(p,1)$-cables bound a smoothly and properly embedded disk in a rational homology ball.  
Furthermore, it has been shown that all $(2n,1)$-cables are algebraically slice~\cite{kawauchi1, Cha:2007-1, Kim-Wu:2018-1}.  
These results make determining the smooth sliceness of such knots especially challenging, even in the simplest case of the $(2,1)$-cable of the figure-eight knot (see~\cite[Section~1]{DKMPS:2024} and~\cite[Section~1]{KPT:2024} for surveys).  
In this article, we completely resolve the smooth sliceness problem for the entire family of these cables and their connected sums.

\begin{thm}\label{thm:main}
    Every nontrivial cable of the figure-eight knot has infinite order in the smooth knot concordance group.
\end{thm}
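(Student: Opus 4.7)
The argument splits based on the cabling parameters $(p,q)$ with $\gcd(p,q)=1$ and $p \ge 2$. I would first handle the cables for which classical concordance invariants suffice. Since the figure-eight knot $4_1$ satisfies $\tau(4_1) = 0$ and $\varepsilon(4_1) = 0$, Hom's cabling formula gives $\tau(K_{p,q}) = \tau(T_{p,q})$, which is nonzero whenever $|q| \ge 2$; additivity of $\tau$ under connected sum then yields infinite order in $\mathcal{C}$. For $(p,\pm 1)$-cables with $p$ odd, one may invoke invariants associated to the $p$-fold cyclic branched cover---for example, Casson--Gordon signatures or Heegaard Floer correction terms of $\Sigma_p(K_{p,\pm 1})$---to extract infinite order; these cases lie within established techniques.

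The essential new case, and the main contribution of the paper, is the family of $(2n,\pm 1)$-cables, all of which are algebraically slice and thus invisible to signature-type invariants (and still have $\tau = 0$). For these I would apply the new concordance invariants $\kappa_R^{(k)}$. The figure-eight admits a strong inversion that extends canonically to each cable, so the $2^k$-fold cyclic branched cover $\Sigma_{2^k}(K_{2n,\pm 1})$ carries a natural involution lifting this strong inversion, together with a compatible real spin$^c$ structure. The invariant $\kappa_R^{(k)}(K)$ is a Fr\o yshov-type numerical quantity extracted from the associated real Seiberg--Witten Floer $K$-theory spectrum, generalizing the Konno--Miyazawa--Taniguchi invariant. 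The plan is to establish: (a) $\kappa_R^{(k)}$ is a smooth concordance invariant, via a real equivariant Bauer--Furuta argument applied to the $2^k$-fold cover of $B^4$ branched over a concordance; and (b) $\kappa_R^{(k)}$ is subadditive under connected sum, so that nonvanishing of $\kappa_R^{(k)}(K_{2n,\pm 1})$ forces infinite order of $K_{2n,\pm 1}$ in $\mathcal{C}$.

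The main obstacle is the nonvanishing $\kappa_R^{(k)}(K_{2n,\pm 1}) \neq 0$, uniformly in $n$, for a suitable choice $k = k(n)$. My strategy would be to identify $\Sigma_{2^k}(K_{2n,\pm 1})$ explicitly with a plumbed or Seifert-fibered three-manifold, exploiting the fact that the figure-eight is fibered and that cyclic branched covers of cables of fibered knots admit tractable descriptions via surgery on lifts of the companion. Once such a description is in hand, I would compute the real Seiberg--Witten Floer $K$-theory equivariantly---most likely by combining a localization argument with an index computation for the real Dirac operator on the appropriate mapping cylinder---to show $\kappa_R^{(k)}(K_{2n,\pm 1}) > 0$. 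The uniformity in $n$ is precisely what distinguishes the ``uniform proof'' from earlier case-by-case treatments: one must choose $k$ so that the invariant scales predictably with $n$, and establish a comparison estimate between the Floer spectra across $n$. This is where the $K$-theoretic (as opposed to merely cohomological) refinement of the Fr\o yshov invariant is expected to supply essentially new information beyond what was available in prior work on the $(2,1)$-cable alone.
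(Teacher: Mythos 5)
Your high-level skeleton (dispatch $|q|\ge 2$ with classical invariants, handle odd $p$ separately, then attack even $p$ via $\kappa_R^{(k)}$) matches the paper's, but the core of the argument in the even case is missing and some of what you write would not work.

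\textbf{Odd $p$ case.} You assert that Casson--Gordon signatures or $d$-invariants of $\Sigma_p((4_1)_{p,1})$ give infinite order when $p$ is odd and that these cases ``lie within established techniques.'' This is not established: the paper cites Hom--Kang--Park--Stoffregen, whose proof relies on \emph{involutive} knot Floer homology together with bordered/immersed-curve machinery, precisely because ordinary $d$-invariants and Casson--Gordon-type data were not known to suffice. You would need to either invoke that result or reproduce something of comparable strength.

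\textbf{Even $p$ case: the central missing idea.} Your plan is to ``identify $\Sigma_{2^k}(K_{2n,\pm1})$ explicitly with a plumbed or Seifert-fibered three-manifold'' and then compute its real Floer $K$-theory directly. This is not what the paper does, and for good reason: the figure-eight is hyperbolic, so its higher cyclic branched covers are not Seifert fibered, and the branched covers of its cables are graph manifolds with nontrivial JSJ decomposition. The actual mechanism is to use the ACMPS surface $S_{2^k m,1}\subset 2\CP^2\smallsetminus(\mathring B^4\sqcup\mathring B^4)$, which is a concordance from $(4_1)_{2^k m,1}$ to the \emph{torus knot} $-T_{2^k m,\,2^k\cdot10m-1}$. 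One then applies a real $10/8$-type inequality (\Cref{thm: higher real 10/8}) to the $2^k$-fold branched cover of $2c\CP^2$ along $c$ copies of this concordance, which transfers the problem to computing $\kappa_R^{(k)}$ of the torus knot. That branched cover \emph{is} a Brieskorn sphere $\Sigma(2^k,2^km,2^k\cdot10m-1)$, and only there does a Mrowka--Ozsv\'ath--Yu / lattice-homology computation become tractable. Without this reduction your plan has no realistic path to a computation, and it also cannot produce the key quantitative input $b^+(\Sigma_{2^k}(S))-b^+(\Sigma_{2^{k-1}}(S))>0$ and $\sigma(\Sigma_{2^k}(S))=2c$ that feeds into the inequality.

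\textbf{Wrong involution.} You say the relevant involution on $\Sigma_{2^k}(K_{2n,\pm1})$ is a lift of a \emph{strong inversion} of the figure-eight. The invariants $\kappa_R^{(k)}$ are defined using the $2^{k-1}$-th power of the \emph{deck transformation} of the $2^k$-fold branched cover (viewed inside $\mathbb Z_4\subset\mathrm{Pin}(2)$ on the real Floer homotopy type). These are genuinely different involutions, and the equivariance structure that makes the $10/8$ argument and the Brieskorn computations go through is that of the deck action, not a strong-inversion lift. Also, ``subadditivity'' of $\kappa_R^{(k)}$ is not the ingredient used; the argument needs exact additivity for $k$-strongly spherical knots (\Cref{lem: linear combination of spherical knots}) combined with the two-sided Fr\o yshov-type inequalities, applied to $c$-fold connected sums for all $c>0$.
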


We remark that this provides the first known example of a torsion knot whose every nontrivial cable is not smoothly slice, thereby offering the first torsion example that supports a question of Miyazaki, posed in~\cite[Question~3]{Miyazaki:1994-1} (see also~\cite[Conjecture~1.4]{Meier:2017} and~\cite[Conjecture~1]{Kang-Park:2022}), which asks whether a nontrivial cable of a non-slice knot can ever be smoothly slice.
In fact, our result \Cref{thm: kappa inequality}, combined with \cite[Theorem~1.1]{Kang-Park:2022}, which uses the involutive knot Floer package, applies more broadly to any Floer thin knot, including all alternating and quasi-alternating knots, that can be transformed into a slice knot by performing full negative twists along two disjoint disks: one intersecting the knot algebraically once, and the other intersecting it algebraically three times.
There are infinitely many such examples representing elements of order two in the smooth concordance group, including the figure-eight knot~\cite[Remark~2.6]{ACMPS:2023-1}.

\subsection{Previous Work}

We begin by briefly recalling the known results related to this problem.  
Throughout this article, we denote by $K_{p,q}$ the $(p,q)$-cable of a knot $K$, where $p$ indicates the longitudinal winding.  
Our focus will be on $(4_1)_{p,1}$, where $4_1$ denotes the figure-eight knot,  
since it is straightforward to verify (for instance, by using the Levine--Tristram signature~\cite{Levine:1969, Tristram:1969}) that all other cables of $4_1$ have infinite order in the smooth knot concordance group, which we denote by $\mathcal{C}$.

The first breakthrough was made for odd values of $p$. Hom, Kang, Park, and Stoffregen~\cite{HKPS:2022-1} proved that $(4_1)_{2n+1,1}$ has infinite order in $\mathcal{C}$ for each nonzero integer $n$, using the involutive knot Floer package introduced by Hendricks and Manolescu~\cite{Hendricks-Manolescu:2017-1}, which is a refinement of the knot Floer complex defined by Ozsváth and Szabó~\cite{ozsvath2004holomorphicknot} and Rasmussen~\cite{Rasmussen:2003-1}.  
Their result also crucially relies on the bordered Floer homology of Lipshitz, Ozsv\'{a}th, and Thurston~\cite{LOT}, as well as its interpretation in terms of immersed curves developed by Hanselman, Rasmussen, and Watson~\cite{HRW:2022, HRW:2023, HRW:2024}.  Moreover, they showed that these knots are linearly independent in $\mathcal{C}$.

For even values of $p$, the first result was for $(4_1)_{2,1}$, which was shown to have infinite order in $\mathcal{C}$ by Dai, Kang, Mallick, Park, and Stoffregen~\cite{DKMPS:2024}.  
They proved that the 2-fold branched cover of $(4_1)_{2,1}$ does not bound an equivariant $\mathbb{Z}_2$-homology ball,  
that is, a $\mathbb{Z}_2$-homology ball over which the deck transformation extends as a smooth involution.  
Their proof relies on the involutive Heegaard Floer package~\cite{Hendricks-Manolescu:2017-1},  
an enhancement of the usual Heegaard Floer homology~\cite{Ozsvath-Szabo:2004-2, Ozsvath-Szabo:2004-1},  
and builds on ideas and results from~\cite{alfieri2020connected, Dai-Hedden-Mallick:2023, dms, mallick-surgery}.

We remark that this strategy provides a strictly stronger refinement of the classical obstruction to sliceness,  
which traditionally focuses on obstructing the existence of a $\mathbb{Z}_2$-homology ball  
(see, for example,~\cite{CG88, En95, Lisca:2007-1, Lisca:2007-2, MO07, Greene-Jabuka:2011-1, Hedden-Livingston-Ruberman:2012-1, HKL16, Hedden-Pinzon:2021}).  
In this case, although the 2-fold branched cover of $(4_1)_{2,1}$ does bound a $\mathbb{Z}_2$-homology ball,  
the authors show that the equivariant obstruction prevents it from bounding one in a way compatible with the deck transformation.  
Alternative proofs of the non-sliceness of $(4_1)_{2,1}$ were subsequently established in~\cite[Theorem~2.1]{ACMPS:2023-1} and~\cite[Corollary~1.20]{KMT23f}.

More recently, a related strategy was applied in~\cite{KPT:2024}, where the authors used a different technique based on real Seiberg--Witten theory,  
combined with a topological construction from~\cite[Section~2]{ACMPS:2023-1}, to conclude that $(4_1)_{4n+2,1}$ is not slice for each integer $n$.  
There are several variants of real Seiberg--Witten theory; for examples, see~\cite{TW09, Na13, Nak15, KMT21, Ka22, Ji22, KMT:2023, Mi23, Li23, BH24, baraglia2025exotic}.
In~\cite{KPT:2024}, the authors use the concordance invariants 
\[
\delta_R(K), \quad \underline{\delta}_R(K), \quad \overline{\delta}_R(K) \in \frac{1}{16} \mathbb{Z}.
\]
of Konno, Miyazawa, and Taniguchi~\cite{KMT:2023}, together with the cohomological real Fr\o yshov inequalities, to show that the 2-fold branched covers of each $(4_1)_{4n+2,1}$ fail to bound equivariant $\mathbb{Z}_2$-homology balls, thereby establishing their non-sliceness. 
The concordance invariants are derived by considering fixed-point sets of certain Fr\o yshov-type invariants on the fixed-point spectrum of an order 2 subgroup $\langle I \rangle \subset O(2)$ acting on Manolescu’s Seiberg--Witten Floer homotopy type~\cite{Ma03} of the 2-fold branched cover of a knot.

\subsection{Our contribution}
To establish the main theorem, it remains to prove that each knot of the form $(4_1)_{2n,1}$ with $n \geq 2$ has infinite order in $\mathcal{C}$.  
We briefly remark on previous techniques applied to this family of knots.  

Although the involutive Heegaard Floer theory used in~\cite{HKPS:2022-1, DKMPS:2024} is a powerful tool,  
the obstructions arising from the involutive knot Floer package vanish in this setting~\cite[Remark~1.4]{HKPS:2022-1}.  
Furthermore, with current techniques, the computations required to extract obstructions as in~\cite{DKMPS:2024} from involutive Heegaard Floer theory remain out of reach for this family.

In~\cite{ACMPS:2023-1}, the obstruction to the smooth sliceness of $(4_1)_{2,1}$ relies on a result of Bryan~\cite[Corollary~1.7]{Bryan:1998-1},  which establishes that $g_{2\mathbb{CP}^2}(2,6) = 10$, meaning that the minimal genus of a smoothly embedded closed oriented surface in $2\mathbb{CP}^2$ representing the homology class $(2,6)$ is $10$.
To extend this strategy to larger values of $n$, one would need the identity
\[
g_{2\mathbb{CP}^2}(2n,6n) = g_{\mathbb{CP}^2}(2n) + g_{\mathbb{CP}^2}(6n),
\]
as noted in~\cite[Corollary~2.9]{ACMPS:2023-1}.  
However, this identity is not known to hold in general; indeed, it is plausible that it may fail for certain values of $n$.  
Currently, the only case where it has been verified is when $n = 1$. A similar limitation applies to approaches based on families of Seiberg--Witten theory combined with the construction from~\cite{ACMPS:2023-1}, including those used in~\cite{KMT23f}.

Lastly, in the strategy of~\cite{KPT:2024}, which applies the invariants $\delta_R$, $\underline{\delta}_R$, and $\overline{\delta}_R$ from real Seiberg--Witten theory,  
a key step is to consider the 2-fold branched cover of a surface cobounded by $(4_1)_{2n,1}$ and $T_{2n,-10n+1}$ in a twice-punctured $2\mathbb{CP}^2$,  
constructed in~\cite{ACMPS:2023-1} (this surface will also play a central role in our article).  
It turns out that this 2-fold branched cover is spin if and only if $n$ is odd, and this spiness is essential to their argument.  
Moreover, the assumptions on $b^+$ in the cohomological real Fr\o yshov inequalities~\cite{KMT:2023} for the invariants $\delta_R$, $\underline{\delta}_R$, and $\overline{\delta}_R$  
make it difficult to extend their method to show that these knots have infinite order in $\mathcal{C}$, even in cases where $n$ is odd.

In order to overcome these difficulties, we use higher-order branched covering spaces together with $\mathbb{Z}_4$-equivariant Floer $K$-theory.  
More precisely, for knots of the form $(4_1)_{2n,1}= (4_1)_{2^k \cdot m, 1}$ with $k \geq 1$ and $m$ odd,  
we consider their $2^k$-fold branched covering spaces.  
In particular, we introduce a family of smooth concordance invariants
\[
\kappa_R^{(k)}\colon \mathcal{C} \to \frac{1}{16} \mathbb{Z},
\]
parametrized by integers $k \geq 1$, and normalized so that the unknot maps to zero.  
These invariants are constructed using $\mathbb{Z}_4$-equivariant $K$-theory applied to the real version of Floer homotopy type of the $2^k$-fold branched cover, equipped with the natural involution induced by composing the deck transformation $2^{k-1}$ times.  
These concordance invariants are higher-order analogues of the real Fr\o yshov invariant $\kappa_R$ introduced by Konno, Miyazawa, and Taniguchi~\cite{KMT21}.  
Indeed, the invariant $\kappa_R^{(k)}$ coincides with $\kappa_R$ applied to the knot that arises as the lift of the original knot to the $2^{k-1}$-fold branched cover,  
so that $\kappa_R^{(1)} = \kappa_R$.

In \Cref{thm: kappa inequality}, we prove that
\[
\frac{1}{2} \leq \kappa_R^{(k)} \left( c  (4_1)_{2^k \cdot m, -1} \right)
\]
for all positive integers $k$, $m$, and $c$, where $m$ is odd and $cK$ denotes the $c$-fold connected sum of a knot $K$.
This inequality follows from two key steps:
\begin{itemize}
    \item Applying real $10/8$ inequality for 4-manifolds with involutions obtained as the $2^k$-fold branched covers along the previously mentioned surface  from \cite{ACMPS:2023-1}. It reduces the computation of $\kappa_R^{(k)}$ for certain torus knots. 
    \item For the computations of $\kappa_R^{(k)}$ for 
 torus knots, we employ Mrowka--Ozsv\'{a}th--Yu's description \cite{MOY96} of the Seiberg--Witten moduli spaces of  Seifert rational homology spheres, combined with the $O(2)$-equivariant lattice Floer homotopy type developed in \cite{KPT:2024} based on \cite{DSS2023}. 
\end{itemize}

We note that the computation of ${\kappa}_R^{(k)}$ for each knot $c (4_1)_{2^k \cdot m, -1}$  
implies that its $2^k$-fold branched cover does not bound any smooth $\mathbb{Z}_2$-equivariant spin $\mathbb{Z}_2$-homology 4-ball,  
with respect to the natural involution induced by composing the deck transformation $2^{k-1}$ times (see \Cref{thm: higher real 10/8} for the precise statement).

\subsection*{Notation and conventions}
While the Neumann-Siebenmann invariant $\bar\mu(Y,\mathfrak{s})$ is defined for plumbed rational homology spheres $Y$ together with a choice of a $\mathrm{spin}^c$ structure $\mathfrak{s}$ on it, when $Y$ is a $\mathbb{Z}_2$-homology sphere and thus carries a unique spin structure $\mathfrak{s}_0$, we will drop it from $\bar\mu(Y,\mathfrak{s}_0)$ and simply write $\bar\mu(Y)$.

\subsection*{Acknowledgements} 

We would like to thank an anonymous referee of our past paper \cite{KPT:2024} for helpful comments, and Jin Miyazawa for helpful discussions.

The first author and the third author gratefully acknowledge support from the Simons Center for Geometry and Physics, Stony Brook University at which many parts of the paper were written.
The second author is partially supported by the Samsung Science and Technology Foundation (SSTF-BA2102-02) and the NRF grant RS-2025-00542968.
The third author was partially supported
by JSPS KAKENHI Grant Number 22K13921 and RIKEN iTHEMS Program.

\section{Some topological computations}

\subsection{An AR negative-definite plumbing graph for $\Sigma(2^k,2^k \cdot m,2^k \cdot 10m - 1)$}

To draw an almost-rational negative-definite plumbing graph for the rational Brieskorn sphere $\Sigma(2^k, 2^k \cdot m, 2^k \cdot 10m - 1)$, where $m$ is a positive odd integer, we first determine its Seifert invariants.  
Using the Seifert algorithm (see, e.g., \cite{Neumann-Raymond:1978-1}), we find that it has one singular fiber of type $(m, -1)$ and $2^k$ singular fibers of type $(2^k \cdot 10m - 1,\ 11 - 2^k \cdot 10m)$.  
The Euler number, which determines the weight of the central node in the resulting plumbing graph, is $-2^k$.

Then we represent the fractions $\frac{m}{1}$ and $\frac{2^k \cdot 10m -1}{2^k \cdot 10m - 11}$ as continued fractions of the following form:
\[
[a_1,\dots,a_n] = a_1 - \cfrac{1}{a_2 - \cfrac{1}{\ddots - \cfrac{1}{a_n}}},
\]
where $a_1,\ldots,a_n$ are positive  integers. It is clear that $\frac{m}{1} = m = [m]$. Furthermore, we have
\[
\frac{2^k \cdot 10m -1}{2^k \cdot 10m - 11} =  [\underbrace{2,\dots,2}_{2^k \cdot m -2},3,\underbrace{2,\dots,2}_{8}].
\]
Therefore an AR negative-definite plumbing graph for $\Sigma(2^k,2^k \cdot m,2^k \cdot 10m-1)$ is given as follows.

\vspace{.2cm}
\[
\begin{tikzpicture}[xscale=1.9, yscale=.8, baseline={(0,-0.1)}]
    \node at (-0.1, 0.3) {$-2^k$};
    \node at (-1, 0.3) {$-m$};
    
    \node at (1, 1.8) {$-2$};
    \node at (2.5, 1.8) {$-2$};
    \node at (3.5, 1.8) {$-3$};
    \node at (4.5, 1.8) {$-2$};
    \node at (6, 1.8) {$-2$};

    \node at (1, -1.2) {$-2$};
    \node at (2.5, -1.2) {$-2$};
    \node at (3.5, -1.2) {$-3$};
    \node at (4.5, -1.2) {$-2$};
    \node at (6, -1.2) {$-2$};
    
    \node at (1, 3.3) {$-2$};
    \node at (2.5, 3.3) {$-2$};
    \node at (3.5, 3.3) {$-3$};
    \node at (4.5, 3.3) {$-2$};
    \node at (6, 3.3) {$-2$};

    \node at (1, -2.7) {$-2$};
    \node at (2.5, -2.7) {$-2$};
    \node at (3.5, -2.7) {$-3$};
    \node at (4.5, -2.7) {$-2$};
    \node at (6, -2.7) {$-2$};
    
    \node at (0, 0) (A0) {$\bullet$};
    \node at (1, 1.5) (A1) {$\bullet$};
    \node at (1, 1.3) (A1b) {};
    \node at (2.5, 1.5) (A2) {$\bullet$};
    \node at (2.5, 1.3) (A2b) {};
    \node at (3.5, 1.5) (A3) {$\bullet$};
    \node at (4.5, 1.5) (A4) {$\bullet$};
    \node at (4.5, 1.3) (A4b) {};
    \node at (6, 1.5) (A5) {$\bullet$};
    \node at (6, 1.3) (A5b) {};

    \node at (-1, 0) (B1) {$\bullet$};
    
    \node at (1, -1.5) (C1) {$\bullet$};
    \node at (1, -1.7) (C1b) {};
    \node at (2.5, -1.5) (C2) {$\bullet$};
    \node at (2.5, -1.7) (C2b) {};
    \node at (3.5, -1.5) (C3) {$\bullet$};
    \node at (4.5, -1.5) (C4) {$\bullet$};
    \node at (4.5, -1.7) (C4b) {};
    \node at (6, -1.5) (C5) {$\bullet$};
    \node at (6, -1.7) (C5b) {};
    
    \node at (1, 3) (D1) {$\bullet$};
    \node at (1, 2.8) (D1b) {};
    \node at (2.5, 3) (D2) {$\bullet$};
    \node at (2.5, 2.8) (D2b) {};
    \node at (3.5, 3) (D3) {$\bullet$};
    \node at (4.5, 3) (D4) {$\bullet$};
    \node at (4.5, 2.8) (D4b) {};
    \node at (6, 3) (D5) {$\bullet$};
    \node at (6, 2.8) (D5b) {};
    \node at (6.3, 3) (D5c) {};
    
    \node at (1, -3) (E1) {$\bullet$};
    \node at (1, -3.2) (E1b) {};
    \node at (2.5, -3) (E2) {$\bullet$};
    \node at (2.5, -3.2) (E2b) {};
    \node at (3.5, -3) (E3) {$\bullet$};
    \node at (4.5, -3) (E4) {$\bullet$};
    \node at (4.5, -3.2) (E4b) {};
    \node at (6, -3) (E5) {$\bullet$};
    \node at (6, -3.2) (E5b) {};
    \node at (6.3, -3) (E5c) {};
    
    \draw (A0) -- (A1);
    \draw[dotted] (A1) -- (A2);
    \draw (A2) -- (A3);
    \draw (A3) -- (A4);
    \draw[dotted] (A4) -- (A5);
    \draw (A0) -- (B1);
    \draw (A0) -- (C1);
    \draw[dotted] (C1) -- (C2);
    \draw (C2) -- (C3);
    \draw (C3) -- (C4);
    \draw[dotted] (C4) -- (C5);
    \draw (A0) -- (D1);
    \draw[dotted] (D1) -- (D2);
    \draw (D2) -- (D3);
    \draw (D3) -- (D4);
    \draw[dotted] (D4) -- (D5);
    \draw (A0) -- (E1);
    \draw[dotted] (E1) -- (E2);
    \draw (E2) -- (E3);
    \draw (E3) -- (E4);
    \draw[dotted] (E4) -- (E5);

    \path (A3) -- (C3) node [font=\huge, midway, sloped] {$\dots$};

    \draw [line width=0.4mm, decorate, decoration = {calligraphic brace,mirror}] (A1b) --  (A2b) node[midway,yshift=-0.8em]{$2^k \cdot m-2$};
    \draw [line width=0.4mm, decorate, decoration = {calligraphic brace,mirror}] (C1b) --  (C2b) node[midway,yshift=-0.8em]{$2^k \cdot m-2$};
    \draw [line width=0.4mm, decorate, decoration = {calligraphic brace,mirror}] (A4b) --  (A5b) node[midway,yshift=-0.8em]{$8$};
    \draw [line width=0.4mm, decorate, decoration = {calligraphic brace,mirror}] (C4b) --  (C5b) node[midway,yshift=-0.8em]{$8$};
    \draw [line width=0.4mm, decorate, decoration = {calligraphic brace,mirror}] (D1b) --  (D2b) node[midway,yshift=-0.8em]{$2^k \cdot m-2$};
    \draw [line width=0.4mm, decorate, decoration = {calligraphic brace,mirror}] (E1b) --  (E2b) node[midway,yshift=-0.8em]{$2^k \cdot m-2$};
    \draw [line width=0.4mm, decorate, decoration = {calligraphic brace,mirror}] (D4b) --  (D5b) node[midway,yshift=-0.8em]{$8$};
    \draw [line width=0.4mm, decorate, decoration = {calligraphic brace,mirror}] (E4b) --  (E5b) node[midway,yshift=-0.8em]{$8$};
    \draw [line width=0.4mm, decorate, decoration = {calligraphic brace}] (D5c) -- (E5c) node[midway,xshift=0.8em]{$2^k$};
    
\end{tikzpicture}
\]
\vspace{.2cm}

From this diagram, we can now compute the Neumann--Siebenmann $\bar{\mu}$-invariant of $\Sigma(2^k, 2^k \cdot m, 2^k \cdot 10m - 1)$, which will be used in later sections. The computation is based on the formula
\[
\bar{\mu} = \frac{\sigma(\Gamma) - w^2}{8},
\]
where $\Gamma$ denotes the plumbing graph and $w$ is the spherical Wu class on $\Gamma$. It is straightforward to verify that $w$ is supported on the central node and on half of the $(-2)$-weighted nodes that appear before each $(-3)$ node. This implies
\[
w^2 = -2^k - 2^k(2^k \cdot m - 2) = 2^k - 2^{2k} \cdot m.
\]
Since the plumbing graph $\Gamma$ is negative definite, its signature is given by the negative of the number of nodes:
\[
\sigma(\Gamma) = -\left(2 + 2^k(2^k \cdot m + 7)\right).
\]
Putting these together, we obtain the following lemma.

\begin{lem} \label{lem:mu-bar-special-case}
The Neumann--Siebenmann invariant of the Brieskorn sphere $\Sigma(2^k, 2^k \cdot m, 2^k \cdot 10m - 1)$ is given by
\[
\bar{\mu}(\Sigma(2^k, 2^k \cdot m, 2^k \cdot 10m - 1)) = -2^k - \frac{1}{4}. \eqno\QEDB
\] 
\end{lem}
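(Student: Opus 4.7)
The plan is to apply the Neumann--Siebenmann formula $\bar{\mu} = (\sigma(\Gamma) - w^2)/8$ to the plumbing graph $\Gamma$ drawn above. This reduces the computation to three steps: identifying the support of the spherical Wu class $w$ on $\Gamma$, computing $w^2$, and computing $\sigma(\Gamma)$.

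The first step is the crux. I would pin down $w$ via its defining parity constraint $w\cdot v\equiv v\cdot v\pmod 2$ for each vertex $v$, by writing $w = \sum x_v v$ with $x_v\in\{0,1\}$ and propagating the constraints inward from the leaves of each arm. Working along a single arm from the $(-2)$-chain of length $8$, through the $(-3)$ node, and then along the long $(-2)$-chain of length $2^k m - 2$, one finds that the alternating pattern forced by the $(-3)$-node produces a support consisting of every other $(-2)$ node along the long arm. Compatibility at the central node, whose weight $-2^k$ is even for $k\geq 1$, and at the $(-m)$-leaf, where $m$ is odd, forces $x_{v_0} = 1$ on the central node and $x_{v_{-m}} = 0$ on the leaf. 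This recovers the support described in the excerpt: the central node together with half of the $(-2)$-nodes in each of the $2^k$ long arms.

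The second and third steps are then elementary. Since the supporting vertices of $w$ are pairwise non-adjacent, $w^2$ equals the sum of their weights, giving
\[
w^2 = -2^k + 2^k \cdot \frac{2^k m - 2}{2}\cdot(-2) = 2^k - 2^{2k}m.
\]
Negative definiteness of $\Gamma$ gives $\sigma(\Gamma) = -|V(\Gamma)|$, and direct counting yields $|V(\Gamma)| = 2 + 2^k(2^k m + 7)$. Substituting both into the Neumann--Siebenmann formula collapses the $2^{2k}m$ terms and leaves $\bar\mu = (-2 - 2^{k+3})/8 = -2^k - 1/4$.

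The main obstacle is the Wu-class identification, as one must carefully propagate the parity constraints through both the $(-3)$ nodes and the long $(-2)$-chains whose length depends on $k$ and $m$, while simultaneously checking compatibility at the central node and the $(-m)$-leaf. That $2^k m - 2$ is even for all $k\geq 1$ is essential, since otherwise the alternating pattern would not close up cleanly and the stated support would not define a valid characteristic representative.
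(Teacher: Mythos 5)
Your proof is correct and follows essentially the same approach as the paper: apply the formula $\bar\mu = (\sigma(\Gamma) - w^2)/8$ to the displayed almost-rational plumbing graph, determine the support of the spherical Wu class, and compute $w^2$ and $\sigma(\Gamma)$ from the weights and vertex count. The only difference is that you spell out the parity-propagation argument pinning down the Wu class, where the paper merely remarks that it is "straightforward to verify"; the supports, the resulting formulas $w^2 = 2^k - 2^{2k}m$ and $\sigma(\Gamma) = -(2 + 2^k(2^km+7))$, and the final answer all agree.
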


\subsection{On Brieskorn spheres of the form $\Sigma(2^k,2^k\cdot m,q)$} \label{subsec: brieskorn k_m_q}


Throughout this subsection, we fix positive integers $k$, $m$, and $q$ such that $m$ and $q$ are odd and coprime. We begin with the following observation:
\[
\Sigma_{2^k}(T_{2^k \cdot m,\, q}) \cong \Sigma(2^k,\, 2^k \cdot m,\, q).
\]
Applying the Seifert algorithm, we find that $\Sigma(2^k,\, 2^k \cdot m,\, q)$ has $2^k + 1$ singular fibers in the case $m>1$:
\[
(m,\,-\alpha), \underbrace{(q,\,-\beta),\,\dots,\,(q,\,-\beta)}_{2^k}.
\]
Here, $\alpha$ and $\beta$ are the unique integers satisfying the following conditions:
\begin{itemize}
    \item $0 < \alpha < m$ and $0 < \beta < q$;
    \item $q\alpha \equiv -1 \pmod{m}$ and $2^k\cdot m\beta \equiv -1 \pmod{q}$.
\end{itemize}
The quotient space $\Sigma(2^k,\, 2^k \cdot m,\, q) / S^1$, where $S^1$ acts by the Seifert action, has genus zero. Its Euler number is given by $-e$, where $e$ is the unique positive integer between $1$ and $2^k$ satisfying
\[
emq - q\alpha - 2^k \cdot m\beta = 1.
\]
Let $a_1,\dots,a_r$ and $b_1,\dots,b_s$ be unique positive integers satisfying
\[
\frac{m}{\alpha} = [a_1,\dots,a_r] \qquad \text{ and } \qquad \frac{q}{\beta} = [b_1,\dots,b_s],
\]
where the right-hand sides denote continued fraction expansions. Then the corresponding plumbing graph $\Gamma_{k,m,q}$, which is negative-definite and almost rational, can be drawn as follows. 
\vspace{.2cm}

\[
\begin{tikzpicture}[xscale=2.4, yscale=1, baseline={(0,-0.1)}]
    \node at (-0.1, 0.3) {$-e$};
    \node at (-1, 0.3) {$-a_1$};
    \node at (-2.5, 0.3) {$-a_r$};
    
    \node at (1, 1.8) {$-b_1$};
    \node at (2.5, 1.8) {$-b_s$};

    \node at (1, -1.2) {$-b_1$};
    \node at (2.5, -1.2) {$-b_s$};
    
    \node at (1, 3.3) {$-b_1$};
    \node at (2.5, 3.3) {$-b_s$};

    \node at (1, -2.7) {$-b_1$};
    \node at (2.5, -2.7) {$-b_s$};
    
    \node at (0, 0) (A0) {$\bullet$};
    \node at (1, 1.5) (A1) {$\bullet$};
    \node at (2.5, 1.5) (A2) {$\bullet$};
    \node at (1.75, 1.5) (A3) {};

    \node at (-1, 0) (B1) {$\bullet$};
    \node at (-2.5, 0) (B2) {$\bullet$};
    
    \node at (1, -1.5) (C1) {$\bullet$};
    \node at (2.5, -1.5) (C2) {$\bullet$};
    \node at (1.75, -1.5) (C3) {};
    
    \node at (1, 3) (D1) {$\bullet$};
    \node at (2.5, 3) (D2) {$\bullet$};
    \node at (2.8, 3) (D2c) {};
    
    \node at (1, -3) (E1) {$\bullet$};
    \node at (2.5, -3) (E2) {$\bullet$};
    \node at (2.8, -3) (E2c) {};
    
    \draw (A0) -- (A1);
    \draw[dotted] (A1) -- (A2);
    \draw (A0) -- (B1);
    \draw[dotted] (B1) -- (B2);
    \draw (A0) -- (C1);
    \draw[dotted] (C1) -- (C2);
    \draw (A0) -- (D1);
    \draw[dotted] (D1) -- (D2);
    \draw (A0) -- (E1);
    \draw[dotted] (E1) -- (E2);

    \path (A3) -- (C3) node [font=\huge, midway, sloped] {$\dots$};

    \draw [line width=0.4mm, decorate, decoration = {calligraphic brace}] (D2c) -- (E2c) node[midway,xshift=0.8em]{$2^k$};
    
\end{tikzpicture}
\]
\vspace{.2cm}

The case $m=1$ is a bit different, as we only have $2^k$ singular fibers of type $(q,-\beta)$, where $\beta$ is the unique integer satisfying $0 < \beta < q$ and $2^k\cdot m \beta = -1 \pmod q$. In this case, the Euler number is given by $-e$, where $e = \frac{2^k\cdot m\beta +1}{q}$. The plumbing graph $\Gamma_{k,m,q}$ in this case is given as follows; note that it is again negative definite and almost rational.\footnote{From now on, we will treat this as a special case of the $m>1$ case, with $r=0$.} Here, $b_1,\dots,b_s$ are unique positive integers satisfying $\frac{q}{\beta} = [b_1,\dots,b_s]$.
\vspace{.2cm}

\[
\begin{tikzpicture}[xscale=2.4, yscale=1, baseline={(0,-0.1)}]
    \node at (-0.1, 0.3) {$-e$};
    
    \node at (1, 1.8) {$-b_1$};
    \node at (2.5, 1.8) {$-b_s$};

    \node at (1, -1.2) {$-b_1$};
    \node at (2.5, -1.2) {$-b_s$};
    
    \node at (1, 3.3) {$-b_1$};
    \node at (2.5, 3.3) {$-b_s$};

    \node at (1, -2.7) {$-b_1$};
    \node at (2.5, -2.7) {$-b_s$};
    
    \node at (0, 0) (A0) {$\bullet$};
    \node at (1, 1.5) (A1) {$\bullet$};
    \node at (2.5, 1.5) (A2) {$\bullet$};
    \node at (1.75, 1.5) (A3) {};

    \node at (1, -1.5) (C1) {$\bullet$};
    \node at (2.5, -1.5) (C2) {$\bullet$};
    \node at (1.75, -1.5) (C3) {};
    
    \node at (1, 3) (D1) {$\bullet$};
    \node at (2.5, 3) (D2) {$\bullet$};
    \node at (2.8, 3) (D2c) {};
    
    \node at (1, -3) (E1) {$\bullet$};
    \node at (2.5, -3) (E2) {$\bullet$};
    \node at (2.8, -3) (E2c) {};
    
    \draw (A0) -- (A1);
    \draw[dotted] (A1) -- (A2);
    \draw (A0) -- (C1);
    \draw[dotted] (C1) -- (C2);
    \draw (A0) -- (D1);
    \draw[dotted] (D1) -- (D2);
    \draw (A0) -- (E1);
    \draw[dotted] (E1) -- (E2);

    \path (A3) -- (C3) node [font=\huge, midway, sloped] {$\dots$};

    \draw [line width=0.4mm, decorate, decoration = {calligraphic brace}] (D2c) -- (E2c) node[midway,xshift=0.8em]{$2^k$};
    
\end{tikzpicture}
\]
\vspace{.2cm}

For each $i = 1, \dots, r$, we consider a 2-sphere $S^2$, divided into the northern hemisphere $S_{a_i}^+$ and the southern hemisphere $S_{a_i}^-$,  
as well as a disk bundle $p_{a_i} \colon E_{a_i} \rightarrow S^2$ of Euler number $-a_i$.  
Similarly, for each $j = 1, \dots, s$ and an element $\gamma \in \mathbb{Z}_{2^k}$, we consider a 2-sphere $S^2$,  
divided into the northern hemisphere $S_{b_i,\gamma}^+$ and the southern hemisphere $S_{b_i,\gamma}^-$,  
along with a disk bundle $p_{b_i,\gamma} \colon E_{b_i,\gamma} \rightarrow S^2$ of Euler number $-b_i$.  
Finally, we consider one more copy of $S^2$, divided into the northern hemisphere $S_c^+$ and the southern hemisphere $S_c^-$,  
as well as a disk bundle $p_c \colon E_c \rightarrow S^2$ of Euler number $-e$.  
We denote the zero-sections of $p_{a_i}$, $p_{b_i,\gamma}$, and $p_c$ by $S_{a_i}$, $S_{b_i,\gamma}$, and $S_c$, respectively.  
We also abuse notation and write
\[
S_{a_i} = S_{a_i}^+ \cup S_{a_i}^-, \quad S_{b_i,\gamma} = S_{b_i,\gamma}^+ \cup S_{b_i,\gamma}^-, \quad S_c = S_c^+ \cup S_c^-.
\]

We consider an action of $\mathbb{Z}_{2^k}$ on $S_c^-$ by rotation that fixes the south pole,  
and choose a pairwise disjoint collection $\{D_{c,\gamma}^-\}_{\gamma \in \mathbb{Z}_{2^k}}$ of disks in $S_c^-$ such that $\gamma' \cdot D_{c,\gamma} = D_{c,\gamma\gamma'}$ for all $\gamma, \gamma' \in \mathbb{Z}_{2^k}$.  
We also choose disks $D_c^+$, $D_{a_i}^\pm$, and $D_{b_i,\gamma}^\pm$, centered around the north or south pole of $S_c$, $S_{a_i}$, and $S_{b_i,\gamma}$, respectively.  
Since the disk bundles $p_{a_i}|_{D_{a_i}^\pm}$, $p_{b_i,\gamma}|_{D_{b_i,\gamma}^\pm}$, $p_c|_{D_c^+}$, and $p_c|_{D_{c,\gamma}^-}$ are trivial,  
we choose trivializations and identify their total spaces with $D^2 \times D^2$,  
where the first factor denotes the base and the second denotes the fiber.  
We refer to these $D^2 \times D^2$ neighborhoods as \emph{elementary disk covers}.

Consider the \emph{swapping map}
\[
sw \colon D^2 \times D^2 \xrightarrow{(x, y) \mapsto (y, x)} D^2 \times D^2.
\]
Using this map, we construct the \emph{plumbed 4-manifold} $W_{\Gamma_{k,m,q}}$,  
which is the smooth 4-manifold defined as follows:
\begin{itemize}
    \item We identify $p_{a_1}^{-1}(D_{a_1}^-)$ with $p_c^{-1}(D_c^+)$ via the swap map.
    \item For each $\gamma \in \mathbb{Z}_{2^k}$, we identify $p_{b_1,\gamma}^{-1}(D_{b_1,\gamma}^+)$ with $p_c^{-1}(D_{c,\gamma}^-)$ via the swap map.
    \item For each $i = 1, \dots, r-1$, we identify $p_{a_i}^{-1}(D_{a_i}^+)$ with $p_{a_{i+1}}^{-1}(D_{a_{i+1}}^-)$ via the swap map.
    \item For each $j = 1, \dots, s-1$ and $\gamma \in \mathbb{Z}_{2^k}$, we identify $p_{b_j,\gamma}^{-1}(D_{b_j,\gamma}^-)$ with $p_{b_{j+1},\gamma}^{-1}(D_{b_{j+1},\gamma}^+)$ via the swap map.
\end{itemize}
The plumbed 4-manifold $W_{\Gamma_{k,m,q}}$ admits a natural $\mathbb{Z}_{2^k}$ (generated by $\tau$) action, defined as follows.
\begin{itemize}
    \item On $p_c^{-1}(D_c^-) = D^2 \times D^2$, $\tau$ acts by rotation on the base $D^2$ and as the identity on the fiber $D^2$.
    \item On any other elementary disk cover $D^2 \times D^2$, $\tau$ acts by  
    \[
    D^2 \times D^2 \xrightarrow{(x, y) \mapsto \left( e^{\frac{2\pi i u}{2^k}} x,\ e^{\frac{2\pi i v}{2^k}} y \right)} D^2 \times D^2
    \]
    for some $u, v \in \{0, \dots, 2^k - 1\}$.
    \item It is straightforward to show that there exists a unique way to assign values of $u$ and $v$ on each elementary disk cover such that they are compatible with the action of $\tau$ on all adjacent elementary disk covers.
\end{itemize}
We can also define a smooth $S^1$-action on $W_{\Gamma_{k,m,q}}$ in a very similar way:  
we start with the action of $S^1$ on $p_c^{-1}(D_c^-)$ by fiber rotation, and extend it to every other elementary disk cover using actions of the form
\[
e^{i\theta} \cdot (x, y) = \left( e^{ik_1\theta} x,\ e^{ik_2 \theta} y \right), \quad k_1, k_2 \in \mathbb{Z}.
\]
As in the case of the $\mathbb{Z}_{2^k}$-action, it is shown in~\cite[Section 2.2]{orlik2006seifert} that there is a unique way to assign values of $k_1$ and $k_2$ to each elementary disk piece (except $p_c^{-1}(D_c^-)$, where the $S^1$-action is already prescribed) in a compatible way,  
and the induced $S^1$-action on the boundary $\partial W_{\Gamma_{k,m,q}}$ is the Seifert action on $\Sigma(2^k, 2^k \cdot m, q)$.  
Since the actions of $\mathbb{Z}_{2^k}$ and $S^1$ on $W_{\Gamma_{k,m,q}}$ commute on the elementary disk cover $p_c^{-1}(D_c^-)$, it follows that they commute everywhere on $W_{\Gamma_{k,m,q}}$.  
Hence, after taking the quotient by the $S^1$-action, we obtain a smooth $\mathbb{Z}_{2^k}$-action on the quotient 3-orbifold $W_{\Gamma_{k,m,q}}/S^1$.

Consider the induced action of $\mathbb{Z}_{2^k}$ on the boundary 2-orbifold $\partial W_{\Gamma_{k,m,q}}/S^1$, which is the Seifert quotient $\mathcal{O} = \Sigma(2^k, 2^k \cdot m, q)/S^1$.  
To describe this action, we divide $\mathcal{O}$ into an open cover $U, V \subset \mathcal{O}$, defined as follows:
\begin{itemize}
    \item $U$ is an open disk with a cone point of order $m$ at its center; that is, $U \cong \mathbb{R}^2 / \mathbb{Z}_m$, where a prescribed generator of $\mathbb{Z}_m$ acts on $\mathbb{R}^2$ by multiplication by $e^{-\frac{2\pi \alpha i}{m}}$.
    \item $V$ is an open disk with $2^k$ cone points of order $q$.
    \item $U \cap V \cong S^1 \times \mathbb{R}$.
\end{itemize}
The action of $\mathbb{Z}_{2^k}$ on $\mathcal{O}$ is defined as follows:
\begin{itemize}
    \item Consider the $\mathbb{Z}_{2^k} (= \langle \tau \rangle)$-action on $\mathbb{R}^2$, given by $\tau(v) = e^{\frac{2\pi i}{2^k}} v$.  
    Since this action commutes with the $\mathbb{Z}_m$-action (which acts as multiplication by $e^{-\frac{2\pi \alpha i}{m}}$), it descends to a smooth $\mathbb{Z}_{2^k}$-action on the orbifold $U$.
    \item $\mathbb{Z}_{2^k}$ acts on $V$ by rotating the disk, cyclically permuting the $2^k$ cone points of $V$.
    \item Since these actions agree on $U \cap V \cong S^1 \times \mathbb{R}$, they glue together to give a well-defined smooth $\mathbb{Z}_{2^k}$-action on $\mathcal{O} = U \cup V$,  
    which coincides with the one induced by the $\mathbb{Z}_{2^k}$-action on $W_{\Gamma_{k,m,q}}/S^1$.
\end{itemize}

\begin{rem} \label{rem: holomorphicity of action on orbifold}
    The orbifold $\mathcal{O}$ can be seen as a complex 1-orbifold. One can choose its complex structure such that the $\mathbb{Z}_{2^k}$-action described above is holomorphic.
\end{rem}

We then claim that $(\partial W_{\Gamma_{k,m,q}}, \tau)$ is equivariantly diffeomorphic to $(\Sigma_{2^k}(T_{2^k \cdot m,q}), \text{deck transformation})$.  
To show this, it suffices to prove the following lemma.

\begin{lem} \label{lem: quotient is torus knot}
    The quotient space $\partial W_{\Gamma_{k,m,q}}/\mathbb{Z}_{2^k}$ is diffeomorphic to $S^3$.  
    The image of $\mathrm{Fix}(\tau)$ under the projection map
    \[
    \partial W_{\Gamma_{k,m,q}} \rightarrow \partial W_{\Gamma_{k,m,q}}/\mathbb{Z}_{2^k} \cong S^3
    \]
    is the torus knot $T_{2^k \cdot m, q}$.
\end{lem}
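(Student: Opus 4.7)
The plan is to work in the Brieskorn model, where the $\mathbb{Z}_{2^k}$-quotient reduces to a straightforward algebraic computation. Concretely, I would realize
\[
\partial W_{\Gamma_{k,m,q}} \;\cong\; \Sigma \;:=\; \{(z_1, z_2, z_3) \in \mathbb{C}^3 : z_1^{2^k} + z_2^{2^k m} + z_3^q = 0\} \cap S^5_{\epsilon},
\]
together with the Seifert $S^1$-action $\lambda\cdot(z_1,z_2,z_3) = (\lambda^{mq} z_1, \lambda^q z_2, \lambda^{2^km} z_3)$ and the natural $\mathbb{Z}_{2^k}$-action $\tau_0\cdot(z_1,z_2,z_3) = (\zeta z_1, z_2, z_3)$ with $\zeta = e^{2\pi i/2^k}$. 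This identification is standard since $\Gamma_{k,m,q}$ is precisely the minimal good resolution graph of the Brieskorn singularity on the right, and the Seifert invariants match those spelled out in \Cref{subsec: brieskorn k_m_q}.

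Granting the equivariant identification $(\partial W_{\Gamma_{k,m,q}}, \tau) \cong (\Sigma, \tau_0)$, the rest is direct. The substitution $w = z_1^{2^k}$ induces a homeomorphism from the quotient $\Sigma/\tau_0$ to the link
\[
\Sigma' \;:=\; \{(w, z_2, z_3) \in \mathbb{C}^3 : w + z_2^{2^k m} + z_3^q = 0\} \cap S^5_{\epsilon'}.
\]
The defining hypersurface is smooth at the origin, being linear in $w$, so $\Sigma' \cong S^3$; explicitly, projection $(w,z_2,z_3)\mapsto (z_2,z_3)$ followed by radial rescaling gives a diffeomorphism $\Sigma' \xrightarrow{\sim} S^3_{(z_2,z_3)}$. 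Under this diffeomorphism, the branch locus $\{w = 0\} \cap \Sigma'$ corresponds to $\{z_2^{2^k m} + z_3^q = 0\} \cap S^3_{(z_2,z_3)}$, which is the standard model for the torus knot $T_{2^km,\,q}$; and this coincides with the image of the $\tau_0$-fixed locus $\{z_1 = 0\} \cap \Sigma$, confirming both claims of the lemma.

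The hard part will be the equivariant identification in the first paragraph. Both $\tau$ and $\tau_0$ commute with the Seifert $S^1$-action, both are holomorphic, and both descend to the same holomorphic rotation on the quotient orbifold $\mathcal{O}$ (fixing the cone point of order $m$ together with a smooth point, and cyclically permuting the $2^k$ cone points of order $q$), as spelled out in \Cref{rem: holomorphicity of action on orbifold} and the discussion preceding it. Any two such $\mathbb{Z}_{2^k}$-lifts of the same orbifold rotation differ by an $S^1$-equivariant automorphism of the Seifert fibration covering the identity on $\mathcal{O}$, and such an automorphism can be absorbed into $S^1$-equivariant conjugation; hence the two actions agree up to conjugacy. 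Equivalently, one can match the equivariant plumbing data elementary-disk-cover by elementary-disk-cover with the local model of the good resolution of the Brieskorn singularity---the uniqueness of the compatible assignment of rotation weights $(u,v)$ in the preceding construction is precisely what makes this work.
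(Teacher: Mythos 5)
Your approach is genuinely different from the paper's: you pass to the Brieskorn model $\Sigma = \{z_1^{2^k}+z_2^{2^km}+z_3^q=0\}\cap S^5$ and read off the quotient and branch locus by the substitution $w=z_1^{2^k}$, whereas the paper works entirely with surgery diagrams for $\partial W_{\Gamma_{k,m,q}}$, performs slam-dunks and the Seifert relation $emq-q\alpha-2^km\beta=1$ to identify the quotient as $S^3$, and then pins down the branch knot $K$ by showing that $0$-surgery on $K$ (blackboard framing) is a connected sum of lens spaces, invoking Greene's theorem on such surgeries, Moser's formula, and Ni--Zhang's characterizing slopes for torus knots. Your calculation in the Brieskorn model is clean and, granted the equivariant identification, gives both conclusions at once.

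However, your argument for the equivariant identification $(\partial W_{\Gamma_{k,m,q}},\tau)\cong(\Sigma,\tau_0)$ has a genuine gap. The assertion that ``any two $\mathbb{Z}_{2^k}$-lifts of the same orbifold rotation agree up to ($S^1$-equivariant) conjugacy'' is false in general: if $\tau=A\circ\tau_0$ with $A(y)=h(\pi(y))\cdot y$ for some $h\colon\mathcal O\to S^1$, then the obstruction to writing $A$ as the coboundary $\psi\tau_0\psi^{-1}\tau_0^{-1}$ lives in $H^1\bigl(\mathbb{Z}_{2^k};\Map(\mathcal O,S^1)\bigr)$ with the twisted $\bar\tau$-action, and this group is nonzero (e.g.\ the constants contribute $\Hom(\mathbb{Z}_{2^k},S^1)\cong\mathbb{Z}_{2^k}$). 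The constraint that actually kills the obstruction in the present situation --- and which you do not use --- is that both $\tau$ and $\tau_0$ fix the same regular fiber $F_{\mathrm{south}}$ pointwise; this forces $h(\mathrm{south})=1$, and since the underlying space of $\mathcal O$ is $S^2$ one can lift $h$ to $\tilde h\colon\mathcal O\to\mathbb R$ with $\tilde h(\mathrm{south})=0$, whence the orbit-sum $\sum_i\tilde h(\bar\tau^ix)$ is the integer constant $2^k\tilde h(\mathrm{south})=0$ and the coboundary equation $g(\bar\tau x)-g(x)=\tilde h(x)$ is explicitly solvable (e.g.\ $g(x)=\tfrac1{2^k}\sum_{i=0}^{2^k-1}i\,\tilde h(\bar\tau^ix)$). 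Your fallback remark --- ``match the equivariant plumbing data elementary-disk-cover by elementary-disk-cover with the local model of the good resolution'' --- is essentially a restatement of what must be proven, since the paper's $\tau$ is defined by an ad hoc rotation-weight recipe on the plumbing and it is not a priori clear that the holomorphic $\mathbb{Z}_{2^k}$-action on the minimal good resolution of the Brieskorn singularity takes that form. So the proposal is reparable, but as written the key step is asserted rather than proved, and the general principle you invoke to justify it is not correct.
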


\begin{proof}
For the proof of this lemma, we follow closely the argument in~\cite[Lemma 4.4]{KPT:2024}.  
Since we are only considering the $\mathbb{Z}_{2^k}$-action on the boundary of $W_{\Gamma_{k,m,q}}$,  
it is straightforward to see that replacing the following part of $\Gamma_{k,m,q}$,
\[
\begin{tikzpicture}[xscale=1.5, yscale=1, baseline={(0,-0.1)}]
    \node at (1, 0.3) {$-a_1$};
    \node at (2, 0.3) {$-a_2$};
    \node at (3, 0.3) {$-a_3$};
    \node at (4.5, 0.3) {$-a_n$};
    
    \node at (1, 0) (C') {$\bullet$};
    \node at (2, 0) (C) {$\bullet$};
    \node at (3, 0) (A1) {$\bullet$};
    \node at (3, -0.2) (A1b) {};
    \node at (4.5, 0) (A5) {$\bullet$};
    \node at (4.5, -0.2) (A5b) {};
    
    \draw (C') -- (C) -- (A1);
    \draw[dotted] (A1) -- (A5);

\end{tikzpicture}
\]
corresponding to the fraction $\frac{m}{\alpha}$, with a new leg 

\[
\begin{tikzpicture}[xscale=1.5, yscale=1, baseline={(0,-0.1)}]
    \node at (1, 0.3) {$-a'_1$};
    \node at (2, 0.3) {$-a'_1$};
    \node at (3, 0.3) {$-a'_3$};
    \node at (4.5, 0.3) {$-a'_n$};
    
    \node at (1, 0) (C') {$\bullet$};
    \node at (2, 0) (C) {$\bullet$};
    \node at (3, 0) (A1) {$\bullet$};
    \node at (3, -0.2) (A1b) {};
    \node at (4.5, 0) (A5) {$\bullet$};
    \node at (4.5, -0.2) (A5b) {};
    
    \draw (C') -- (C) -- (A1);
    \draw[dotted] (A1) -- (A5);

\end{tikzpicture}
\]
corresponding to the fraction $\frac{m}{\alpha + (2^k - e)m}$, that is,
\[
\frac{m}{\alpha + (2^k - e)m} = a_1' - \cfrac{1}{a_2' - \cfrac{1}{\ddots - \cfrac{1}{a_n'}}},
\]
while changing the Euler number of the disk bundle $p_c$ to $-2^k$,  
yields a 3-manifold that is $\mathbb{Z}_{2^k}$-equivariantly diffeomorphic to the original one.

Hence, $\partial W_{\Gamma_{k,m,q}}$ can be $\mathbb{Z}_{2^k}$-equivariantly described using the symmetric surgery diagram shown in \Cref{fig:kirby1}.  
Taking the quotient by the $\mathbb{Z}_{2^k}$-action yields the surgery diagram in \Cref{fig:kirby3},  
where the branching set (i.e., the image of $\mathrm{Fix}(\tau)$) is drawn in red.

To verify that this diagram describes $S^3$, we ignore the red curve (as it is not part of the surgery data),  
replace one fractional surgery circle with a chain of integer-framed circles,  
and then apply slam-dunk moves inductively until only a single (unknotted) surgery circle remains.  
Using the Seifert relation
\[
emq - q\alpha - 2^k \cdot m\beta = 1,
\]
we deduce that the resulting surgery slope is of the form $\frac{1}{N}$.  
Hence, the quotient $\partial W_{\Gamma_{k,m,q}}/\mathbb{Z}_{2^k}$ is diffeomorphic to $S^3$.

Following the remainder of the proof in~\cite[Lemma 4.4]{KPT:2024}, we conclude that the knot $K$ is the torus knot $T_{2^k \cdot m, q}$.  
We briefly outline their argument. The first observation is that $0$-surgery on $K$, taken with respect to the blackboard framing, yields a connected sum of two lens spaces.  
This implies, via~\cite[Theorem 1.5]{greene2015space}, that the surgery slope must be $2^k \cdot mq$ with respect to the Seifert framing.  
Additionally, the Seifert invariants of the resulting surgery manifold can be computed using the method in~\cite[Proposition 3.1]{moser1971elementary}.  
Consequently, we obtain the identity
\[
S^3_N(K) \cong S^3_N(T_{2^k \cdot m, q}) \qquad \text{for all integers } N > 1.
\]
Finally, since a large enough slope is a characterizing slope for the torus knot $T_{2^k \cdot m, q}$ by~\cite[Theorem 1.3]{ni2014characterizing},  
we conclude that $K = T_{2^k \cdot m, q}$, as claimed.
\end{proof}

\begin{figure}[h]
\centering
\includegraphics[height=.3\linewidth]{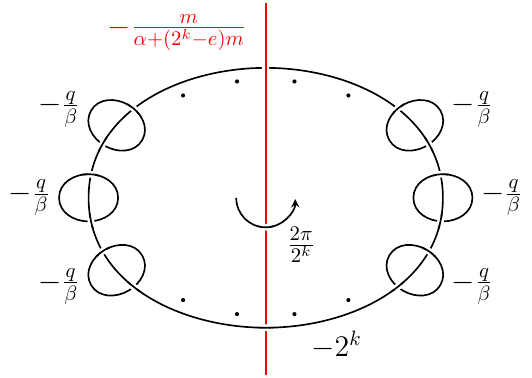}
\caption{A surgery diagram for $\partial W_{\Gamma_{k,m,q}}$. The action of $\tau$, a generator of $\mathbb{Z}_{2^k}$, is given by rotation by $\frac{2\pi}{2^k}$ about the red vertical surgery curve.}
\label{fig:kirby1}
\end{figure}

\begin{figure}[h]
\centering
\includegraphics[height=.28\linewidth]{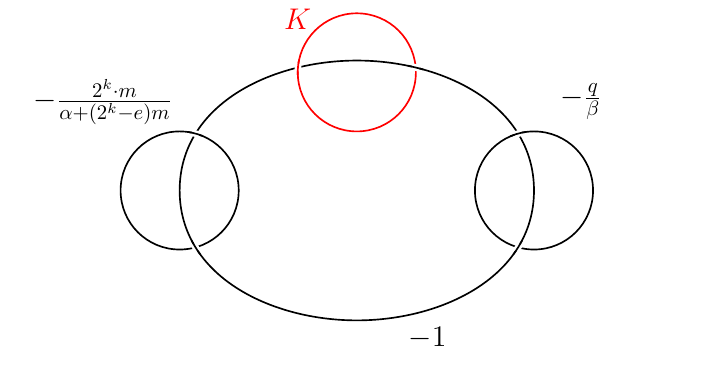}
\caption{A surgery diagram for the quotient 3-manifold $\partial W_{\Gamma_{k,m,q}}/\mathbb{Z}_{2^k}$. The branching set $K$, namely the image of $\mathrm{Fix}(\tau)$ under the projection $\partial W_{\Gamma_{k,m,q}} \to \partial W_{\Gamma_{k,m,q}}/\mathbb{Z}_{2^k}$, is drawn in red.}
\label{fig:kirby3}
\end{figure}


\subsection{Levine-Tristram signatures of torus knots $T_{pm,pmn-1}$}

Given any knot $K$, its Levine--Tristram signature~\cite{Levine:1969, Tristram:1969} is an integer-valued function on the unit circle $S^1$, denoted by
\[
\sigma_K\colon S^1 \to \mathbb{Z}, \qquad \omega \mapsto \sigma_K(\omega).
\]
This function satisfies the following properties:
\begin{itemize}
    \item $\sigma_K(1) = 0$;
    \item $\sigma_K(-1) = \sigma(K)$,
\end{itemize}
where $\sigma(K)$ denotes the classical signature of $K$.
Throughout this paper, we adopt the convention that $\sigma(T_{2,3}) = -2$; that is, positive torus knots have negative signature.

Given a positive integer $n$, we define the following quantity:
\[
\sigma^{(n)}(K) = \sum_{\omega^n = 1} \sigma_K(\omega),
\]
where the sum is taken over all $n$-th roots of unity $\omega \in S^1$. In this subsection, we compute the value of $\sigma^{(p)}(T_{pm, pmn - 1})$ for positive integers $p$, $m$, and $n$, not all equal to $1$.  
This computation will be used in \Cref{subsec: homological computation}.  
We begin by recalling the following result from~\cite[Chapter XII]{Ka87}:

\begin{thm} \label{thm:torus signature}
Let $p, q > 0$ be coprime integers. Define
\[
\Sigma_{p,q} = \left\{ \frac{k}{p} + \frac{l}{q} \,\middle|\, 0 < k < p,\; 0 < l < q \right\} \subset (0,2).
\]
Then for any $x \in (0,1) \setminus \Sigma_{p,q}$, we have
\[
\sigma_{T_{p,q}}(e^{2\pi i x}) = \left| \Sigma_{p,q} \setminus (x,x+1) \right| - \left| \Sigma_{p,q} \cap (x,x+1) \right|. \eqno\QEDB
\] 
\end{thm}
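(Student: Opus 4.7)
The plan is to show that both sides of the claimed identity are locally constant functions of $x$ on $(0,1) \setminus \Sigma_{p,q}$, that they agree at $x = 0^+$, and that they have identical jumps as $x$ crosses each element of $\Sigma_{p,q} \cap (0,1)$ and each element of $\{s - 1 : s \in \Sigma_{p,q} \cap (1,2)\}$.

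I would first recall that the Alexander polynomial of $T_{p,q}$ is
\[
\Delta_{p,q}(t) = \frac{(t^{pq}-1)(t-1)}{(t^p-1)(t^q-1)},
\]
whose roots on the unit circle are precisely $\{e^{2\pi i s} : s \in \Sigma_{p,q}\}$ modulo $\mathbb{Z}$. Since $\omega \mapsto \sigma_K(\omega)$ is locally constant away from roots of $\Delta_K$, the function $x \mapsto \sigma_{T_{p,q}}(e^{2\pi i x})$ is constant on each component of $(0,1) \setminus \Sigma_{p,q}$; the right-hand side is manifestly constant on the same components. At $x = 0^+$, the sliding interval $(x, x+1)$ equals $(0,1)$, and the involution $s \leftrightarrow 2 - s$ gives a bijection $\Sigma_{p,q} \cap (0,1) \leftrightarrow \Sigma_{p,q} \cap (1,2)$, so the right-hand side vanishes, matching $\sigma_{T_{p,q}}(1) = 0$.

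Next I would analyze the jumps. As $x$ increases through $s \in \Sigma_{p,q} \cap (0,1)$, the point $s$ leaves $(x, x+1)$ through the left endpoint, so $|\Sigma_{p,q} \cap (x, x+1)|$ drops by $1$ and the right-hand side jumps by $+2$. Symmetrically, as $x$ increases through $s - 1$ for $s \in \Sigma_{p,q} \cap (1,2)$, the point $s$ enters through the right endpoint and the right-hand side jumps by $-2$. To match these jumps on the signature side, I would use the Milnor fiber of the Brieskorn singularity $z_1^p + z_2^q = 0$ as a Seifert surface for $T_{p,q}$: the associated Seifert form decomposes into two-dimensional blocks indexed by pairs $(k,l)$ with $0 < k < p$ and $0 < l < q$, on which the algebraic monodromy acts with eigenvalue $e^{2\pi i(k/p + l/q)}$. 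Computing the signature of the Hermitian pencil $(1-\omega)V + (1-\bar\omega)V^T$ block by block then shows that $\sigma_{T_{p,q}}(e^{2\pi i x})$ jumps by $+2$ when $x$ crosses $k/p + l/q \in (0,1)$ and by $-2$ when $x$ crosses $k/p + l/q - 1 \in (0,1)$, which exactly matches the right-hand side.

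The main obstacle is verifying the signs of the signature jumps in the block-by-block computation; this requires careful tracking of orientation conventions and is essentially the content of Brieskorn's classical analysis of the Milnor fiber of an isolated plane curve singularity. As a sanity check, in the case $(p,q) = (2,3)$ one has $\Sigma_{2,3} = \{5/6, 7/6\}$, and both sides of the formula equal $0$ on $(0, 1/6) \cup (5/6, 1)$ and $-2$ on $(1/6, 5/6)$, matching the normalization $\sigma(T_{2,3}) = -2$. One could alternatively bypass the direct Seifert form calculation by invoking the $G$-signature theorem applied to the cyclic branched covers of $T_{p,q}$, which directly expresses $\sigma_{T_{p,q}}(e^{2\pi i k/n})$ as a lattice-point count of the required form.
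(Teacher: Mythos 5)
The paper does not prove this theorem; it is quoted verbatim from Kauffman's book (the $\QEDB$ at the end marks it as a citation), so there is no in-paper proof to compare against. Your outline is, however, a correct reconstruction of the classical argument. The three ingredients -- (i) both sides are locally constant off the roots of the Alexander polynomial, which you correctly identify as $\{e^{2\pi i s}: s\in\Sigma_{p,q}\}$ via the factorization $\Delta_{T_{p,q}}(t)=\frac{(t^{pq}-1)(t-1)}{(t^p-1)(t^q-1)}$ and the bijection $(k,l)\mapsto (kq+lp)/pq$; (ii) agreement at $x=0^+$ using the involution $s\leftrightarrow 2-s$; and (iii) matching $\pm 2$ jumps -- are exactly right, and your $T_{2,3}$ sanity check confirms both the jump signs and the sign convention $\sigma(T_{2,3})=-2$. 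One small imprecision: the real $2$-dimensional invariant subspaces of the monodromy on the Milnor fiber are indexed not by single pairs $(k,l)$ but by conjugate pairs $\{(k,l),(p-k,q-l)\}$, the two members contributing conjugate eigenvalues $e^{\pm 2\pi i(k/p+l/q)}$; since $\gcd(p,q)=1$ forces $(k,l)\neq(p-k,q-l)$, every block genuinely has rank $2$, which is what makes your jump-by-$2$ bookkeeping go through. You are right that the one genuinely substantive step is the sign of each block's contribution to the signature jump, and your two suggested routes -- Brieskorn's explicit Hermitian pencil computation on the Milnor fiber, or the $G$-signature theorem applied to cyclic branched covers via the Viro/Kauffman--Taylor identification of equivariant cover signatures with Levine--Tristram signatures -- are both standard and complete the argument; they are essentially the content of the cited chapter of Kauffman.
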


We then observe that $\Sigma_{pm, pmn - 1} \cap \frac{1}{pm} \mathbb{Z} = \emptyset$ and
\[
\left\vert \Sigma_{pm, pmn - 1} \cap \left( \frac{k}{pm}, \frac{k+1}{pm} \right) \right\vert =
\begin{cases}
    0 & \text{if } k = 0; \\
    kn - 1 & \text{if } 0 < k < pm.
\end{cases}
\]
Adding the values of $\left\vert \Sigma_{pm, pmn - 1} \cap \left( \frac{k}{pm}, \frac{k+1}{pm} \right) \right\vert$, we obtain:
\[
\left\vert \Sigma_{pm, pmn - 1} \cap \left( \frac{k}{p}, \frac{k+1}{p} \right) \right\vert =
\begin{cases}
    1 - m + \dfrac{m(m+1)n}{2} & \text{if } k = 0; \\
    -m + knm^2 + \dfrac{m(m+1)n}{2} & \text{if } 0 < k < p.
\end{cases}
\]
Note that we have the following anti-symmetry:
\[
r \in \Sigma_{p,q} \quad \Longleftrightarrow \quad 2 - r \notin \Sigma_{p,q},
\]
and hence we do not need to consider the cases $pm \le k < 2pm$. Using this symmetry and \Cref{thm:torus signature}, we obtain the following identity:
\[
\begin{split}
    \sigma^{(p)}(T_{pm,\,pmn - 1}) 
    &= 2 \cdot \sum_{k=0}^{p-1} (p - 1 - 2k) \cdot \left\vert \Sigma_{pm,\,pmn - 1} \cap \left( \frac{k}{p},\,\frac{k + 1}{p} \right) \right\vert \\
    &= 2(p - 1) + 2 \cdot \sum_{k=0}^{p-1} (p - 1 - 2k)\left( -m + knm^2 + \frac{m(m + 1)n}{2} \right).
\end{split}
\]
For simplicity, we write
\[
N_{m,n} = \sum_{k=0}^{p-1} (p - 1 - 2k)\left( -m + knm^2 + \frac{m(m + 1)n}{2} \right),
\]
so that $\sigma^{(p)}(T_{pm,\,pmn - 1}) = 2(p - 1) + 2N_{m,n}$. 

To compute $N_{m,n}$, we observe that it is a degree-one polynomial in $n$, so the difference $N_{m,n+1} - N_{m,n}$ is independent of $n$. We compute:
\[
\begin{split}
    N_{m,0} &= \sum_{k=0}^{p-1} -m(p-1-2k) = 0, \\
    N_{m,n+1}-N_{m,n} &= \sum_{k=0}^{p-1} (p-1-2k)\left( km^2+\frac{m(m+1)}{2} \right) \\
    &= \frac{m}{2}  \sum_{k=0}^{p-1} (p-1)(m+1) + 2( m(p-1)-m-1 )k - 4mk^2 \\
    &= \frac{m}{2} \left( p(p-1)(m+1) + ((p-2)m-1)p(p-1) - \frac{4mp(p-1)(2p-1)}{3} \right) \\
    &= \frac{mp(p-1)(3(m+1 + (p-2)m - 1) - 2m(2p-1))}{6} \\
    &= -\frac{p(p-1)(p+1)m^2}{6},
\end{split}
\]
which implies that
\[
N_{m,n} = -\frac{p(p - 1)(p + 1)m^2 n}{6}.
\]
Therefore, we obtain the following result.

\begin{lem} \label{lem:signatures}
    For any integers $p, m, n > 0$ that are not all equal to $1$, we have
    \[
    \sigma^{(p)}(T_{pm,\,pmn - 1}) = 2(p - 1) - \frac{p(p - 1)(p + 1)m^2 n}{3}.
    \]
\end{lem}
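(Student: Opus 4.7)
The plan is to evaluate $\sigma^{(p)}(T_{pm,pmn-1})$ directly by applying Kauffman's formula (\Cref{thm:torus signature}) at each $p$-th root of unity $e^{2\pi i k/p}$, $k = 0, 1, \ldots, p-1$, and then combining the resulting $p$ values using the reality relation $\sigma_K(\bar\omega) = \sigma_K(\omega)$, which doubles the contribution from each of $k = 1, \ldots, p-1$ while $k = 0$ contributes zero. Before invoking \Cref{thm:torus signature}, I would first verify that $k/p \notin \Sigma_{pm,pmn-1}$ for every $k$, which reduces to the arithmetic observation that $\gcd(pm,\,pmn-1) = 1$ so that no fraction of the form $\tfrac{a}{pm} + \tfrac{b}{pmn-1}$ with $0 < a < pm$ and $0 < b < pmn-1$ can simplify to an integer multiple of $1/p$.

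Next, I would count $\bigl|\Sigma_{pm,pmn-1} \cap (k/p, (k+1)/p)\bigr|$ for each $0 \le k < p$ by refining the interval into $m$ sub-intervals of length $1/(pm)$ and enumerating the fractions $\ell/(pmn-1)$ landing in each. A direct enumeration should produce $1 - m + \tfrac{m(m+1)n}{2}$ when $k = 0$ and $-m + knm^2 + \tfrac{m(m+1)n}{2}$ for $0 < k < p$. The anti-symmetry $r \in \Sigma_{pm,pmn-1} \iff 2 - r \notin \Sigma_{pm,pmn-1}$ then allows me to rewrite Kauffman's count $|\Sigma_{pm,pmn-1} \setminus (x,x+1)| - |\Sigma_{pm,pmn-1} \cap (x,x+1)|$ at $x = k/p$ purely in terms of sub-intervals of $(0,1)$, so that summing over $k$ yields an expression of the form $\sigma^{(p)}(T_{pm,pmn-1}) = 2(p-1) + 2\sum_{k=0}^{p-1} (p-1-2k) \cdot \bigl|\Sigma_{pm,pmn-1} \cap (k/p, (k+1)/p)\bigr|$.

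The final step is a straightforward algebraic simplification of the resulting polynomial $N_{m,n} = \sum_{k=0}^{p-1}(p-1-2k)\bigl(-m + knm^2 + \tfrac{m(m+1)n}{2}\bigr)$, which is linear in $n$. It suffices to evaluate $N_{m,0} = 0$, which follows from $\sum_{k=0}^{p-1}(p-1-2k) = 0$, and the constant difference $N_{m,n+1} - N_{m,n}$. The latter reduces to the standard sums $\sum_{k=0}^{p-1} k$ and $\sum_{k=0}^{p-1} k^2$, and after routine manipulation yields $-\tfrac{p(p-1)(p+1)m^2}{6}$; plugging back gives the claimed closed form.

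The proof is essentially a direct computation, and the principal obstacle is bookkeeping: isolating the $k = 0$ edge case when counting in $(k/p, (k+1)/p)$, correctly deploying the reality and anti-symmetry relations so that contributions are neither lost nor double-counted, and surviving the polynomial algebra in which several linear and quadratic pieces in $k$ must cancel cleanly. No conceptual input beyond \Cref{thm:torus signature} is required.
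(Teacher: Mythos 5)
Your proposal follows the paper's proof essentially step by step: apply Kauffman's count (\Cref{thm:torus signature}) at each $p$-th root of unity, enumerate $\Sigma_{pm,pmn-1}$ on the $\tfrac{1}{pm}$-grid and aggregate to the $\tfrac{1}{p}$-grid, use the anti-symmetry of $\Sigma_{pm,pmn-1}$ to reduce everything to sub-intervals of $(0,1)$, and then evaluate the resulting linear-in-$n$ polynomial $N_{m,n}$ via $N_{m,0}=0$ and the constant first difference. One bookkeeping slip in the write-up: the displayed identity $\sigma^{(p)} = 2(p-1) + 2\sum_{k=0}^{p-1}(p-1-2k)\,\lvert\Sigma_{pm,pmn-1}\cap(k/p,(k+1)/p)\rvert$ prematurely inserts the $2(p-1)$ --- the correct intermediate identity is $\sigma^{(p)} = 2\sum_{k=0}^{p-1}(p-1-2k)\,\lvert\Sigma_{pm,pmn-1}\cap(k/p,(k+1)/p)\rvert$, and the $2(p-1)$ offset only appears after replacing the true $k=0$ count $1 - m + \tfrac{m(m+1)n}{2}$ by the formula $-m+knm^2+\tfrac{m(m+1)n}{2}$ that is valid for $0<k<p$; but your subsequent definition of $N_{m,n}$ and the conclusion $\sigma^{(p)}=2(p-1)+2N_{m,n}$ make clear the intended computation is correct and matches the paper's.
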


\begin{proof}
    Using the computation above, we have
    \[
    \sigma^{(p)}(T_{pm,\,pmn - 1}) = 2(p - 1) + 2N_{m,n} = 2(p - 1) - \frac{p(p - 1)(p + 1)m^2 n}{3}. \qedhere
    \]
\end{proof}

\subsection{Branched covers along the disks $D_{k,m}$} \label{subsec: homological computation}

Recall from~\cite{ACMPS:2023-1} that there exists a concordance
\[
S \subset X := 2\mathbb{CP}^2 \smallsetminus \left(\mathring{B}^4 \sqcup \mathring{B}^4\right) \cong (S^3 \times I) \# 2\mathbb{CP}^2 ,
\]
of homology class $(1,3)$ in $H_2(X, \partial X; \mathbb{Z}) \cong H_2(2\mathbb{CP}^2; \mathbb{Z}) = \mathbb{Z} \oplus \mathbb{Z}$, from the $0$-framed figure-eight knot $4_1$ to the $(-10)$-framed unknot (see Figure~\ref{fig:figure8}). Taking the $(2^k \cdot m, 1)$-cable of $S$ then gives a concordance $S_{2^k \cdot m, 1} \subset X$, of homology class $(2^k \cdot m, 2^k \cdot 3m)$, from $(4_1)_{2^k \cdot m, 1}$ to the torus knot
$T_{2^k \cdot m,\, 1 - 2^k \cdot 10m} = -T_{2^k \cdot m,\, 2^k \cdot 10m - 1}$.

\vspace{.2cm}
\begin{figure}[h]
\centering
\labellist
\pinlabel $0$ at 88 97
\pinlabel $0$ at 230 97
\pinlabel $-10$ at 350 97
\pinlabel \textcolor{red}{$1$} at 205 68
\pinlabel \textcolor{red}{$1$} at 197 33
\endlabellist
\includegraphics[width=.8\linewidth]{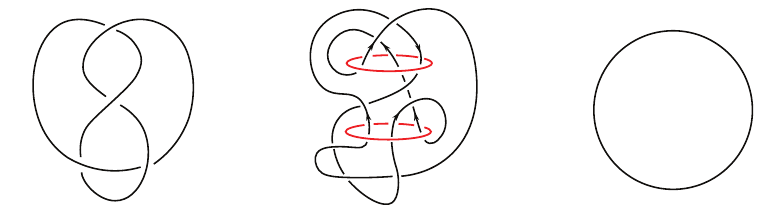}
\caption{A smooth concordance $S$ in the twice-punctured $2\mathbb{CP}^2$ from the figure-eight knot to the unknot.}
\label{fig:figure8}
\end{figure}

Suppose that the knot $(4_1)_{2^k \cdot m,1}$ is smoothly slice. Then it bounds a properly embedded disk in $B^4$. By gluing this disk to the cabled concordance $S_{2^k \cdot m,1}$, we get a smooth disk in $B^4 \# 2\mathbb{CP}^2$, again of homology class $(2^k \cdot m,2^k \cdot 3m)$, that bounds $-T_{2^k \cdot m,2^k \cdot 10m-1}$. We denote this disk by $D_{k,m}$.

In this subsection, we compute the values of $b^+$ and the signature of the $2^k$-fold branched cover $\Sigma_{2^k}(D_{k,m})$ of $B^4 \# 2\mathbb{CP}^2$, branched along $D_{k,m}$. To carry out this computation, we use the following formula, which can be derived from basic algebraic topology and the $G$-signature theorem (see, for example,~\cite[Lemma 2.2]{baraglia2024knot}).

\begin{lem} \label{lem: homological formula}\belowdisplayskip=-11pt
Let $X$ be a closed 4-manifold, and let $S$ be a properly embedded surface in $B^4 \# X$ whose boundary is a knot. Let $n > 0$ be a prime power that divides the homology class $[S] \in H_2(B^4 \# X;\mathbb{Z})$, so that the $n$-fold branched cover $\Sigma_n(S)$ of $B^4 \# X$ branched along $S$ is well-defined. Then we have:
\begin{align*}
    b^+\big(\Sigma_n(S)\big) & = n\, b^+(X) + (n - 1) g(S) - \frac{n^2 - 1}{6n} [S]^2 + \frac{1}{2} \sigma^{(n)}(\partial S), \\
    \sigma\big(\Sigma_n(S)\big) & = n\, \sigma(X) - \frac{n^2 - 1}{3n} [S]^2 + \sigma^{(n)}(\partial S).
\end{align*}\qed
\end{lem}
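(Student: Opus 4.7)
The plan is to assemble both formulas from three standard ingredients: multiplicativity of the Euler characteristic under cyclic branched covers, the Rokhlin--Tristram--Viro signature formula (a boundary-corrected application of the Atiyah--Singer $G$-signature theorem), and the rational-homology-sphere structure of $\partial \Sigma_n(S)$, which forces the intersection form of $\Sigma_n(S)$ to be nondegenerate over $\mathbb{Q}$.

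First I would set $W := B^4 \# X$, so that $W$ is diffeomorphic to $X$ with an open $4$-ball removed; hence $\chi(W) = \chi(X) - 1$, $\sigma(W) = \sigma(X)$, and $b^+(W) = b^+(X)$. Writing $\widetilde{W} := \Sigma_n(S)$, the preimage $\widetilde{S}$ of the branch locus maps homeomorphically onto $S$, while $\widetilde{W} \smallsetminus \widetilde{S} \to W \smallsetminus S$ is an honest $n$-fold cyclic cover. Additivity of the Euler characteristic (the link of $\widetilde{S}$ being a circle bundle, hence with vanishing $\chi$) then yields
\[
\chi(\widetilde{W}) = n\chi(W) - (n-1)\chi(S) = n(\chi(X) - 1) - (n-1)(1 - 2g(S)).
\]
For the signature, the $G$-signature theorem applied to the cyclic $\mathbb{Z}_n$-action on $\widetilde{W}$ fixing $\widetilde{S}$ (whose self-intersection is $[S]^2/n$), together with the evaluation $\sum_{k=1}^{n-1}\csc^2(\pi k/n) = (n^2 - 1)/3$ and the identification of the APS-type boundary defect with the Levine--Tristram sum (the $\omega = 1$ summand vanishing since $\sigma_K(1) = 0$), produces the claimed signature formula.

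To extract $b^+(\widetilde{W})$, I would use $2b^+(\widetilde{W}) = b_2(\widetilde{W}) + \sigma(\widetilde{W})$, which is valid because $\partial \widetilde{W} = \Sigma_n(\partial S)$ is a rational homology sphere (its $H_1$ is Alexander torsion), so Lefschetz duality makes the intersection form on $H_2(\widetilde{W}; \mathbb{Q})$ nondegenerate and forces $b_1(\widetilde{W}) = b_3(\widetilde{W})$. Assuming $X$ is simply connected (the regime of the application), $b_1(\widetilde{W}) = 0$ as well, so $b_2(\widetilde{W}) = \chi(\widetilde{W}) - 1$. Substituting the Euler characteristic and signature formulas above and simplifying using $\chi(X) + \sigma(X) = 2 + 2b^+(X)$, the constant $n$-pieces cancel exactly and one reads off the stated $b^+$ formula.

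The main delicacy I expect is the boundary correction in the signature step: the bare $G$-signature theorem is a closed-manifold statement, so either one caps off $\widetilde{W}$ by a computable filling of $\Sigma_n(\partial S)$ and argues by additivity of the signature, or one invokes the APS version and identifies the $\eta$-invariant defect with $\sigma^{(n)}(\partial S)$ via the Casson--Gordon interpretation of Levine--Tristram signatures as $G$-signature defects of cyclic branched covers. Pinning down the signs and normalizations in that identification is the one genuinely nontrivial step; everything else is routine bookkeeping.
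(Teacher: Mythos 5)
The paper itself gives no proof of this lemma; it is stated with a $\qed$ and attributed to ``basic algebraic topology and the $G$-signature theorem,'' with a pointer to Baraglia's work. Your reconstruction follows exactly that prescription: $G$-signature with boundary correction identified as the Levine--Tristram sum, Euler-characteristic multiplicativity for branched covers, and then $2b^+ = b_2 + \sigma$ via the rational-homology-sphere boundary. I verified the final bookkeeping: writing $b_2(\widetilde W) = \chi(\widetilde W) - 1$, expanding $\chi(\widetilde W) = n(\chi(X)-1) - (n-1)(1-2g(S))$, and using $\chi(X) + \sigma(X) = 2 + 2b^+(X)$, the constant terms cancel and the stated $b^+$ formula drops out. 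The identity $\sum_{k=1}^{n-1}\csc^2(\pi k/n) = (n^2-1)/3$ and the factor $[\widetilde S]^2 = [S]^2/n$ combine to give the coefficient $\tfrac{n^2-1}{3n}$, matching the lemma. So this is the same route as the paper's cited source.

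One genuine gap worth flagging concretely: you invoke $b_2(\widetilde W) = \chi(\widetilde W) - 1$, which needs $b_1(\widetilde W) = 0$, and you justify it only by ``assuming $X$ is simply connected.'' That assumption is not in the lemma's hypotheses, and even granting it, $b_1(\widetilde W)=0$ is not an immediate consequence: since $[S]$ is $n$-divisible, $H_1(W\setminus S)$ can have $\mathbb{Z}/n$ torsion and $H_1$ of the unbranched $n$-fold cover is governed by the Alexander-type module of $S$, which is not a priori rationally trivial. Running the bookkeeping without this assumption shows the formula actually requires $b_1(\widetilde W) = n\,b_1(X)$ (so $=0$ when $b_1(X)=0$). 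What rescues the argument is precisely the prime-power hypothesis on $n$: the standard Casson--Gordon-style transfer argument shows that for $n$ a prime power, the rational homology of the cyclic branched cover of a $\mathbb{Q}$-homology ball-with-handles bounded by a $\mathbb{Z}/p$-homology sphere has $b_1 = 0$. You should cite or sketch this step rather than fold it into simple connectedness; as written, a reader could reasonably object that the vanishing of $b_1(\widetilde W)$ has not been established. The rest of the argument — identification of $W = B^4\# X$, multiplicativity of $\chi$, the $G$-signature computation with the Levine--Tristram defect, and the use of Lefschetz duality to get $b_1 = b_3$ — is correct.
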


Using the signature formula, we compute the values of $b^+(\Sigma_{2^k}(D_{k,m}))$, $b^+(\Sigma_{2^{k-1}}(D_{k,m}))$, and $\sigma(\Sigma_{2^k}(D_{k,m}))$.  
Note that the homology class of $D_{k,m}$ is divisible by $2^k$, so the $2^k$-fold branched cover $\Sigma_{2^k}(D_{k,m})$ is well-defined.  
Since the homology class of $D_{k,m}$ is $(2^k \cdot m, 2^k \cdot 3m)$, we compute
\[
[D_{k,m}]^2 = 10 \cdot 2^{2k}  m^2.
\]
Furthermore, applying \Cref{lem:signatures}, we obtain:
\[
\begin{split}
\sigma^{(2^k)} (\partial D_{k,m}) &= -\sigma^{(2^k)}(T_{2^k \cdot m,\, 2^k \cdot 10m - 1}) 
= -2(2^k - 1) + \frac{10 \cdot 2^k (2^k - 1)(2^k + 1)m^2}{3}, \\
\sigma^{(2^{k-1})} (\partial D_{k,m}) &= -\sigma^{(2^{k-1})}(T_{2^k \cdot m,\, 2^k \cdot 10m - 1}) 
= -2(2^{k-1} - 1) + \frac{20 \cdot 2^k (2^{k-1} - 1)(2^{k-1} + 1)m^2}{3}.
\end{split}
\]
Now, applying \Cref{lem: homological formula}, we obtain the following result, which will be used in the subsequent sections.

\begin{lem} \label{lem: homological quantities}
We have
\[
b^+\big(\Sigma_{2^k}(D_{k,m})\big) = 2^k + 1, \qquad b^+\big(\Sigma_{2^{k-1}}(D_{k,m})\big) = 2^{k-1} + 1, \qquad \sigma\big(\Sigma_{2^k}(D_{k,m})\big) = 2.
\]
\end{lem}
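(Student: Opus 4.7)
The plan is to apply \Cref{lem: homological formula} with ambient 4-manifold $X = 2\mathbb{CP}^2$, for which $b^+(X) = 2$ and $\sigma(X) = 2$, to the properly embedded surface $S = D_{k,m} \subset B^4 \# 2\mathbb{CP}^2$ whose boundary is the torus knot $-T_{2^k \cdot m,\, 2^k \cdot 10m - 1}$. Since $D_{k,m}$ is a disk, we have $g(D_{k,m}) = 0$, which eliminates the genus contribution from the $b^+$ formula. The homology class $[D_{k,m}] = (2^k m,\, 3 \cdot 2^k m) \in H_2(2\mathbb{CP}^2;\mathbb{Z})$ is divisible by both $2^k$ and $2^{k-1}$, so both branched covers $\Sigma_{2^k}(D_{k,m})$ and $\Sigma_{2^{k-1}}(D_{k,m})$ are well-defined. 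The self-intersection is immediate from the standard intersection form on $2\mathbb{CP}^2$:
\[
[D_{k,m}]^2 = (2^k m)^2 + (3 \cdot 2^k m)^2 = 10 \cdot 2^{2k} m^2.
\]

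For the Levine--Tristram contributions, I would use the reversal identity $\sigma_{-K}(\omega) = -\sigma_K(\omega)$ together with \Cref{lem:signatures} applied to $T_{2^k \cdot m,\, 2^k \cdot 10m - 1}$ (i.e.\ with $p = 2^k$ or $p = 2^{k-1}$, $m = m$, and $n = 10$), to read off $\sigma^{(2^k)}(\partial D_{k,m})$ and $\sigma^{(2^{k-1})}(\partial D_{k,m})$ as recorded just before the statement of the lemma.

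The computation of $b^+\bigl(\Sigma_{2^k}(D_{k,m})\bigr)$ then substitutes these ingredients into the first formula of \Cref{lem: homological formula}. The key observation, and really the only nontrivial point, is the cancellation between the self-intersection term $-\frac{n^2-1}{6n}[D_{k,m}]^2$ and the ``torus'' part $\frac{10 \cdot n (n-1)(n+1) m^2}{6}$ of $\frac{1}{2}\sigma^{(n)}(\partial D_{k,m})$; these precisely cancel (for $n = 2^k$ and also for $n = 2^{k-1}$, after matching factors of two), leaving only
\[
b^+\bigl(\Sigma_n(D_{k,m})\bigr) = 2n - (n-1) = n+1.
\]
Specializing to $n = 2^k$ and $n = 2^{k-1}$ yields the first two equalities. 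The signature computation is analogous: in the formula $\sigma(\Sigma_{2^k}(D_{k,m})) = 2 \cdot 2^k - \frac{2^{2k}-1}{3 \cdot 2^k}[D_{k,m}]^2 + \sigma^{(2^k)}(\partial D_{k,m})$, the self-intersection term cancels exactly with the $\frac{10 \cdot 2^k(2^k-1)(2^k+1)m^2}{3}$ part of $\sigma^{(2^k)}(\partial D_{k,m})$, leaving $2 \cdot 2^k - 2(2^k-1) = 2$.

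The main (only) obstacle is bookkeeping the factors of $2$ and $m$ carefully so that the advertised cancellations really occur; once that is checked, the three identities drop out immediately.
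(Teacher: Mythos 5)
Your proposal is correct and follows the same approach as the paper: compute $[D_{k,m}]^2 = 10 \cdot 2^{2k} m^2$, feed the equivariant signatures from \Cref{lem:signatures} into \Cref{lem: homological formula}, and observe that the self-intersection term and the leading part of the Levine--Tristram contribution cancel exactly. One small slip in your description: when applying \Cref{lem:signatures} to evaluate $\sigma^{(2^{k-1})}(T_{2^k m,\,2^k\cdot 10m-1})$, the correct parametrization is $p = 2^{k-1}$, $m' = 2m$, $n' = 10$ (so that $pm' = 2^k m$), not $m' = m$ as you wrote; this is why the paper's expression carries the factor $20 \cdot 2^k$ rather than $10 \cdot 2^{k-1}$. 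Since you defer to the values recorded in the paper, the conclusion is unaffected. Your observation that the cancellation is uniform in $n$ — giving $b^+(\Sigma_n(D_{k,m})) = n+1$ for any $n \mid 2^k m$ with $[D_{k,m}] = (N,3N)$, $N = 2^k m$ — is a slightly cleaner packaging than the paper's case-by-case substitution, though the underlying computation is identical.
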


\begin{proof}
First, we compute:
\[
\begin{split}
    b^+\big(\Sigma_{2^k}(D_{k,m})\big) &= 2^k \cdot b^+\big(2\mathbb{CP}^2\big) - \frac{2^{2k} - 1}{6 \cdot 2^k} [D_{k,m}]^2 + \frac{1}{2} \sigma^{(2^k)}\big(\partial D_{k,m}\big) \\
    &= 2^{k+1} - \frac{10(2^{2k} - 1)2^{2k} m^2}{6 \cdot 2^k} - (2^k - 1) + \frac{10 \cdot 2^k(2^k - 1)(2^k + 1)m^2}{6} \\
    &= 2^k + 1.
\end{split}
\]
Similarly,
\[
\begin{split}
    b^+\big(\Sigma_{2^{k-1}}(D_{k,m})\big) &= 2^{k-1} \cdot b^+\big(2\mathbb{CP}^2\big) - \frac{2^{2k - 2} - 1}{6 \cdot 2^{k - 1}} [D_{k,m}]^2 + \frac{1}{2} \sigma^{(2^{k - 1})}\big(\partial D_{k,m}\big) \\
    &= 2^k - \frac{10(2^{2k - 2} - 1)2^{2k} m^2}{3 \cdot 2^k} - (2^{k - 1} - 1) + \frac{20 \cdot 2^k(2^{k - 1} - 1)(2^{k - 1} + 1)m^2}{6} \\
    &= 2^{k - 1} + 1.
\end{split}
\]

Finally, the signature is given by:
\[\belowdisplayskip=-11pt
\begin{split}
    \sigma\big(\Sigma_{2^k}(D_{k,m})\big) &= 2^k \cdot \sigma\big(2\mathbb{CP}^2\big) - \frac{2^{2k} - 1}{3 \cdot 2^k} [D_{k,m}]^2 + \sigma^{(2^k)}\big(\partial D_{k,m}\big) \\
    &= 2^{k+1} - \frac{10(2^{2k} - 1)2^{2k} m^2}{3 \cdot 2^k} - 2(2^k - 1) + \frac{10 \cdot 2^k(2^k - 1)(2^k + 1)m^2}{3} \\
    &= 2.
\end{split}
\]
\end{proof}

It is also important that the $2^k$-fold branched cover admits a unique spin structure:
\begin{lem}\label{homology}
     Let $K$ and $K'$ be knots in $S^3$, and let $X$ be an oriented, smooth, compact, connected cobordism from $S^3$ to $S^3$ with $H_1(X; \mathbb{Z}) = 0$. Let $S$ be an oriented, compact, connected, properly and smoothly embedded cobordism in $X$ from $K$ to $K'$, such that the homology class $[S]$ is divisible by $2^k$. Consider the $2^k$-fold branched cover $\Sigma_{2^k}(S)$ of $X$ along $S$, which defines a $\mathbb{Z}_{2^k}$-equivariant cobordism from $\Sigma_{2^k}(K)$ to $\Sigma_{2^k}(K')$. If
     \[
     PD(w_2(X)) = [S]/2^k \pmod 2,
     \]
     where $[S]/2^k$ denotes the unique element of $H^2(X;\mathbb{Z})$ which becomes $[S]$ when multiplied by $2^k$, then $\Sigma_{2^k}(S)$ admits a unique spin structure.
\end{lem}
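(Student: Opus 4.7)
The plan is to handle existence and uniqueness of the spin structure separately, with existence coming from a Stiefel--Whitney class computation and uniqueness from the vanishing of $H^1(\Sigma_{2^k}(S);\mathbb{Z}/2)$.

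\textbf{Existence.} Let $\pi\colon \Sigma_{2^k}(S)\to X$ denote the branched covering map and $\widetilde S = \pi^{-1}(S)$ the ramification locus. Let $c \in H^2(X;\mathbb{Z})$ be the class with $2^k c = PD[S]$ used to construct the cyclic cover, so that $PD[\widetilde S] = \pi^* c$. I would invoke the standard formula for the second Stiefel--Whitney class of an $n$-fold cyclic branched cover,
\[
w_2\bigl(\Sigma_{2^k}(S)\bigr) \ \equiv\ \pi^* w_2(X) + (2^k - 1)\,PD[\widetilde S] \pmod 2,
\]
(which follows from the Riemann--Hurwitz-type identity $K_Y = \pi^* K_X + (n-1)\widetilde S$ in the complex-analytic case, and from a local computation near $\widetilde S$ in general). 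Since $2^k - 1$ is odd, the right-hand side reduces to $\pi^*\bigl(w_2(X) + c\bigr)\pmod 2$, which vanishes by the hypothesis $PD(w_2(X)) \equiv [S]/2^k \pmod 2$. Hence $\Sigma_{2^k}(S)$ admits a spin structure.

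\textbf{Uniqueness.} Since spin structures on a connected $4$-manifold form a torsor over $H^1(-;\mathbb{Z}/2)$, uniqueness amounts to $H^1(\Sigma_{2^k}(S);\mathbb{Z}/2) = 0$. My plan is to argue via the Mayer--Vietoris decomposition
\[
\Sigma_{2^k}(S) = Y^\circ \cup \nu(\widetilde S),
\]
where $Y^\circ$ is the complement of an open tubular neighborhood of $\widetilde S$, and $\pi\colon Y^\circ \to X\setminus\nu(S)$ is an honest unbranched $\mathbb{Z}/2^k$-cover. The hypothesis $H_1(X;\mathbb{Z})=0$ combined with the long exact sequence of $(X, X\setminus\nu(S))$ and the Thom isomorphism for the normal bundle of $S$ yields that $H_1(X\setminus\nu(S);\mathbb{Z})$ is cyclic, generated by the meridian of $S$. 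Tracing this information through the Mayer--Vietoris sequence, and using that attaching $\nu(\widetilde S)$ kills the meridian of $\widetilde S$, should yield the claimed vanishing modulo $2$.

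\textbf{Expected difficulty.} The uniqueness step is the main obstacle: in general $H_1(Y^\circ;\mathbb{Z}/2)$ depends on the full Alexander-module data of the pair $(X,S)$, not merely on $H_1(X\setminus S)$, so the connectedness of $S$ and the $H_1$-vanishing of $X$ must be used in a substantive way. I expect the resolution to combine a transfer-type cancellation exploiting that the covering classifier lands in a cyclic $2$-group, with the fact that the gluing along $\partial\nu(\widetilde S)$ destroys exactly the cohomological contribution coming from the normal circle bundle of $\widetilde S$, leaving no room for nontrivial $\mathbb{Z}/2$-cohomology in degree one.
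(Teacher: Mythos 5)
Your existence argument is essentially the paper's: the Stiefel--Whitney formula
\[
w_2\bigl(\Sigma_{2^k}(S)\bigr)\equiv \pi^*w_2(X)+(2^k-1)\,PD[\widetilde S]\pmod 2
\]
is precisely the content of Nagami's theorem, which the paper invokes by citation (extending the closed case of \cite{nagami2001existence} to $S^3$ boundaries). You assert the formula rather than citing or proving it, but the substance is the same; the reduction to $\pi^*(w_2(X)+c)=0$ using $2^k-1$ odd and the hypothesis $w_2(X)\equiv c\pmod 2$ is correct.

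The uniqueness step is where your proposal has a genuine gap, and you acknowledge as much. The Mayer--Vietoris decomposition into $Y^\circ\cup\nu(\widetilde S)$ forces you to compute $H^1(Y^\circ;\mathbb{Z}_2)$ for the \emph{unbranched} $\mathbb{Z}_{2^k}$-cover of the exterior, which, as you correctly observe, depends on the mod-$2$ Alexander module of $(X,S)$ and is not controlled by $H_1(X)$ alone. The paper's argument avoids this entirely by using the \emph{branched} transfer (Smith) sequence, iterated up the tower of intermediate double covers. Concretely: for a double branched cover $p\colon Z\to W$ branched over a connected surface $T$, one has with $\mathbb{Z}_2$ coefficients the exact sequence
\[
\cdots \to H_1(W,T)\to H_1(Z)\to H_1(W)\to H_0(W,T)=0.
\]
Since $T$ is connected, the long exact sequence of the pair gives a surjection $H_1(W;\mathbb{Z}_2)\twoheadrightarrow H_1(W,T;\mathbb{Z}_2)$, so $H_1(W;\mathbb{Z}_2)=0$ forces $H_1(W,T;\mathbb{Z}_2)=0$ and hence $H_1(Z;\mathbb{Z}_2)=0$. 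Starting from $H_1(X;\mathbb{Z}_2)=0$ (which follows from $H_1(X;\mathbb{Z})=0$ by the universal coefficient theorem) and the fact that the lift of $S$ to each intermediate cover $\Sigma_{2^j}(S)$ is again a single connected copy of $S$, this kills $H_1(\Sigma_{2^j}(S);\mathbb{Z}_2)$ for all $j\le k$ by induction. This is the ``transfer-type cancellation'' you anticipated, but applied directly to the branched cover rather than to the open part; it sidesteps the Alexander-module bookkeeping your Mayer--Vietoris route would require. As written, your uniqueness argument does not close, so the proof is incomplete.
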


\begin{proof}
    The existence of a spin structure on $\Sigma_{2^k}(S)$ is established in \cite{nagami2001existence} for the case of closed 4-manifolds, and the argument extends straightforwardly to 4-manifolds with $S^3$ boundaries. Moreover, one can verify that $H^1(\Sigma_{2^k}(S); \mathbb{Z}_2) = 0$ using the transfer sequence for 2-fold coverings. This implies that the spin structure on $\Sigma_{2^k}(S)$ is unique.
\end{proof}

\section{Concordance invariants from higher covers}

\subsection{Notations}
We begin by introducing the following representations of the cyclic group $G = \mathbb{Z}_4$, viewed as the cyclic group of order $4$ generated by $j \in \mathrm{Pin}(2)$:
\[
G = \{1, j, -1, -j\}.
\]
\begin{itemize}
    \item Let $\tilde{\mathbb{R}}$ denote the 1-dimensional real representation of $G$ defined via the surjection $G \to \mathbb{Z}_2 = \{1, -1\}$, where $\mathbb{Z}_2$ acts on $\mathbb{R}$ by scalar multiplication.
    
    \item Similarly, let $\tilde{\mathbb{C}}$ denote the 1-dimensional complex representation of $G$ defined via the same surjection $G \to \mathbb{Z}_2$, with $\mathbb{Z}_2$ acting on $\mathbb{C}$ by scalar multiplication.
    
    \item Let $\mathbb{C}$ denote the 1-dimensional complex representation of $G$ in which $j \in G$ acts as multiplication by $i$.
\end{itemize}
These representations are used to define a  $\mathbb{Z}_4$-equivariant stable homotopy category $\mathfrak{C}_G$ introduced in~\cite{KMT:2023} which contains our $\Z_4$-equivariant real Floer homotopy type. 
We only use certain properties of $\mathfrak{C}_G$ and cohomological or K-theoretic quantities of objects in $\mathfrak{C}_G$, so we do not explain $\mathfrak{C}_G$. See \cite[Section 3.4]{KMT:2023} for more details.

\subsection{The higher real concordance invariants and inequalities}
We introduce the main concordance invariants from real Seiberg--Witten theory. In~\cite{KMT21, KMT:2023}, Konno, Miyazawa, and the third author defined the invariants
\[
{\delta}_R( Y, \mathfrak{s}, \tau), \quad \underline{\delta}_R( Y, \mathfrak{s}, \tau), \quad \overline{\delta}_R( Y, \mathfrak{s}, \tau), \quad \kappa_R( Y, \mathfrak{s}, \tau) \in \frac{1}{16}\mathbb{Z},
\]
where $Y$ is a rational homology 3-sphere, $\tau$ is a smooth involution whose fixed point set is nonempty and of codimension two, and $\mathfrak{s}$ is a spin structure on $Y$ satisfying $\tau^*\mathfrak{s} \cong \mathfrak{s}$. Note that, if $Y$ is an integral homology 3-sphere and $K\subset Y$ is a knot, then for any integer $k>0$, the $2^k$-fold branched  cover $\Sigma_{2^k}(Y,K)$ is a $\mathbb{Z}_2$-homology sphere and thus carries a unique spin structure, which is obviously invariant under deck transformations.

\begin{defn}
Given a knot $K$ in $S^3$, we define:
\begin{align*}
&{\delta}_R^{(k)} (K):=  {\delta}_R( \Sigma_{2^k}(K), \mathfrak{s}_0,  \tau) \in \frac{1}{16} \Z \\
&\underline{\delta}_R^{(k)} (K):=  \underline{\delta}_R( \Sigma_{2^k}(K), \mathfrak{s}_0,  \tau) \in \frac{1}{16} \Z \\
&\overline{\delta}_R^{(k)} (K):=  \overline{\delta}_R( \Sigma_{2^k}(K), \mathfrak{s}_0,  \tau) \in \frac{1}{16} \Z \\
& {\kappa}_R^{(k)} (K):=  {\kappa}_R( \Sigma_{2^k}(K), \mathfrak{s}_0,  \tau) \in \frac{1}{16} \Z 
\end{align*}
and 
\[
SWF_R^{(k)}(K) := [SWF_R ( \Sigma_{2^k}(K), \mathfrak{s}_0,  \tau) ] \in \mathfrak{C}_G
\]
for $k \geq 1$, where $\tau$ is the $2^{k-1}$-fold composition of the deck transformation of the $2^k$-fold branched cover, $\mathfrak{s}_0$ is the unique spin structure on $\Sigma_{2^k}(K)$, and $\mathfrak{C}_G$ is the $\mathbb{Z}_4$-equivariant stable homotopy category introduced in~\cite{KMT:2023}.
\end{defn}

\begin{defn}
    For a positive integer $k$, we say that a knot $K$ is \emph{$k$-strongly spherical} if
    \[
    SWF_R^{(k)}(K) \simeq (\mathbb{C}^r)^+
    \]
    as a $\mathbb{Z}_4$-equivariant stable homotopy type for some $r \in \mathbb{Q}$.
\end{defn}

\begin{rem}
    Clearly, if $K$ is a $k$-strongly spherical knot, then 
    \[
    SWF_R^{(k)}(K) \simeq (\mathbb{C}^{-\kappa_R^{(k)}(K)})^+.
    \]
\end{rem}

The following version of the real $10/8$ inequality holds for the invariants $\kappa_R^{(k)}(K)$:

\begin{thm} \label{thm: higher real 10/8}
    Let $K$ and $K'$ be knots in $S^3$, and let $X$ be an oriented smooth compact connected cobordism from $S^3$ to $S^3$ with $H_1(X; \mathbb{Z}) = 0$. Let $S$ be an oriented, compact, connected, properly and smoothly embedded concordance in $X$ from $K$ to $K'$ such that the homology class $[S]$ is divisible by $2^k$ and 
    \[
    \operatorname{PD}(w_2(X)) = [S]/2^k \pmod{2}.
    \]
    Then we have
    \begin{align*} \label{ineq1}
        -\frac{\sigma(\Sigma_{2^k}(S))}{16} \leq b^+(\Sigma_{2^k}(S)) - b^+(\Sigma_{2^{k-1}}(S)) + \kappa_R^{(k)}(K') - \kappa_R^{(k)}(K),
    \end{align*}
    where $\sigma(W)$ denotes the signature of a 4-manifold $W$.

    Moreover, if 
    \[
    b^+(\Sigma_{2^k}(S)) - b^+(\Sigma_{2^{k-1}}(S)) > 0
    \]
    and $K$ is $k$-strongly spherical, then
    \begin{align*} \label{ineq2}
        -\frac{\sigma(\Sigma_{2^k}(S))}{16} + \frac{1}{2} \leq b^+(\Sigma_{2^k}(S)) - b^+(\Sigma_{2^{k-1}}(S)) + \kappa_R^{(k)}(K') - \kappa_R^{(k)}(K).
    \end{align*}
\end{thm}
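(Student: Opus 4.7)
The plan is to apply a $G$-equivariant relative Bauer--Furuta construction to the branched cover cobordism $\Sigma_{2^k}(S)$ and extract the numerical inequality from $\mathbb{Z}_4$-equivariant $K$-theory, following the strategy developed in~\cite{KMT:2023}. Set $W := \Sigma_{2^k}(S)$, equipped with the involution $\tau$ given by the $2^{k-1}$-fold composition of the deck transformation; the quotient $W/\tau$ is naturally identified with $\Sigma_{2^{k-1}}(S)$, and the fixed-point set of $\tau$ is a codimension-$2$ branch surface sitting inside $W$. By \Cref{homology}, $W$ admits a unique spin structure $\mathfrak{s}_W$ extending the unique spin structures on the two ends, and one verifies $\tau^*\mathfrak{s}_W\cong \mathfrak{s}_W$. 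Thus $(W,\tau,\mathfrak{s}_W)$ fits into the real Seiberg--Witten setup of~\cite{KMT:2023}, and the relative Bauer--Furuta construction produces a morphism in $\mathfrak{C}_G$ of the form
\[
SWF_R^{(k)}(K) \;\longrightarrow\; \Sigma^{n_{\tilde{\mathbb{R}}}\,\tilde{\mathbb{R}} \,\oplus\, n_{\tilde{\mathbb{C}}}\,\tilde{\mathbb{C}}} SWF_R^{(k)}(K'),
\]
where the representation-theoretic shifts $n_{\tilde{\mathbb{R}}},n_{\tilde{\mathbb{C}}}$ are determined by $W$.

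Next, I would identify the shifts. The $\tilde{\mathbb{R}}$-shift is the dimension of the $\tau$-anti-invariant part of $H^+(W;\mathbb{R})$: Smith theory for the branched double cover $W\to W/\tau$ identifies the $\tau$-invariant part of $H^+(W;\mathbb{R})$ with $H^+(\Sigma_{2^{k-1}}(S);\mathbb{R})$, yielding
\[
n_{\tilde{\mathbb{R}}} \;=\; b^+(\Sigma_{2^k}(S)) - b^+(\Sigma_{2^{k-1}}(S)).
\]
The $\tilde{\mathbb{C}}$-shift, which governs the virtual $\tau$-anti-invariant part of the Dirac index on $W$, is computed via the $G$-spin theorem for $(W,\tau)$; its contribution in $K$-theory is $-\sigma(W)/16$ after absorbing the usual integer normalizations into the Floer grading convention for $\kappa_R^{(k)}$.

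Applying the $\mathbb{Z}_4$-equivariant $K$-theoretic degree inequality from~\cite{KMT:2023} to the morphism above yields the first inequality. For the strengthening, if $K$ is $k$-strongly spherical then $SWF_R^{(k)}(K)\simeq (\mathbb{C}^{-\kappa_R^{(k)}(K)})^+$, so the relative Bauer--Furuta map is represented by a (non-null) $G$-equivariant stable map out of a genuine representation sphere. When $n_{\tilde{\mathbb{R}}}>0$, any such map necessarily suspends a non-trivial $\tilde{\mathbb{R}}$-summand, and the fact that the $K$-theoretic Euler class of $\tilde{\mathbb{R}}$ is $2$-torsion forces an extra $\tfrac12$ contribution in the resulting $K$-theoretic degree bound, giving the second inequality.

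The main obstacle is the representation-theoretic bookkeeping: one must apply the $G$-spin theorem to compute the character of the equivariant Dirac index on the interior of $(W,\tau)$ and reconcile the resulting $\tilde{\mathbb{C}}$-shift with the normalization conventions used to define $\kappa_R^{(k)}$; this is where the specific value $-\sigma(\Sigma_{2^k}(S))/16$ enters the inequality. The $\tfrac12$ improvement requires the careful Euler-class argument in $\mathbb{Z}_4$-equivariant $K$-theory sketched above, which is the real-Seiberg--Witten analogue of the sphere-sharpening step in Furuta's classical $10/8$ inequality.
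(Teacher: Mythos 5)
The paper's own proof of this theorem is a one-line citation: the first inequality is exactly \cite[Theorem~1.1(iv)]{KMT21} applied to the branched-cover cobordism $\Sigma_{2^k}(S)$, and the second is \cite[Proposition~6.5]{MPT25}, with \Cref{homology} supplying the required spin structure. Your proposal instead tries to rederive the content of those citations from scratch via a relative Bauer--Furuta construction in $\mathfrak{C}_G$. Conceptually this is the same approach --- it is what sits inside the cited results --- and your bookkeeping for the first inequality is plausible: the identification $W/\tau \cong \Sigma_{2^{k-1}}(S)$, the count $n_{\tilde{\mathbb{R}}} = b^+(\Sigma_{2^k}(S)) - b^+(\Sigma_{2^{k-1}}(S))$ coming from the $\tau$-anti-invariant part of $H^+$, and the $\tilde{\mathbb{C}}$-shift governed by the equivariant Dirac index (which after normalization yields the $-\sigma/16$ term) are all the right ingredients, though the phrase ``absorbing the usual integer normalizations'' hides real work that \cite{KMT21} does carefully.

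Your explanation of the $\tfrac12$-improvement, however, is not convincing as stated. The claim that ``the $K$-theoretic Euler class of $\tilde{\mathbb{R}}$ is $2$-torsion'' and that this ``forces an extra $\tfrac12$ contribution'' does not match the actual algebra: in $R(\mathbb{Z}_4) = \mathbb{Z}[t]/(t^4-1)$ the relevant Euler class of $\tilde{\mathbb{C}} = \tilde{\mathbb{R}} \otimes \mathbb{C}$ is $1-t^2$, which is a zero divisor ($(1+t^2)(1-t^2)=0$) but is not $2$-torsion, and $\tilde{\mathbb{R}}$ itself does not have a Thom class in complex equivariant $K$-theory. The genuine sharpening when the domain is a representation sphere and $n_{\tilde{\mathbb{R}}} > 0$ requires a more delicate argument about the restriction of the Bauer--Furuta map to fixed-point strata and its interaction with the $K$-theory long exact sequence --- this is precisely the content of \cite[Proposition~6.5]{MPT25} that the paper invokes, and your sketch does not supply a substitute for it. So the first half of your proposal is a reasonable (if compressed) unpacking of the same route; the second half has a genuine gap at the $\tfrac12$-improvement step.
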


\begin{proof}
The first inequality follows from~\cite[Theorem 1.1(iv)]{KMT21}, applied to the $2^k$-fold branched cover along $S$, combined with \Cref{homology}. 
For the second inequality, we apply~\cite[Proposition 6.5]{MPT25} to the involutive spin 4-manifolds given by the $2^k$-fold branched cover along $S$, again using \Cref{homology}.
\end{proof}

Similar inequalities hold for ${\delta}_R^{(k)}$, $\underline{\delta}_R^{(k)}$, and $\overline{\delta}_R^{(k)}$, but we omit them here, as only the inequality for the $\kappa_R^{(k)}$-invariant will be used. These inequalities ensure the following:

\begin{cor}
    For each $k \geq 1$, the invariants ${\delta}_R^{(k)}$, $\underline{\delta}_R^{(k)}$, $\overline{\delta}_R^{(k)}$, and ${\kappa}_R^{(k)}$ descend to well-defined maps
    \[
    {\delta}_R^{(k)},\ \underline{\delta}_R^{(k)},\ \overline{\delta}_R^{(k)},\ {\kappa}_R^{(k)}\colon \mathcal{C} \to \frac{1}{16} \Z,
    \]
    which send the unknot to zero. \qed
\end{cor}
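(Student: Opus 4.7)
The plan is to derive concordance invariance directly from \Cref{thm: higher real 10/8} (for $\kappa_R^{(k)}$), together with the analogous real $10/8$-type inequalities for $\delta_R^{(k)}$, $\underline{\delta}_R^{(k)}$, $\overline{\delta}_R^{(k)}$ referenced right after it, applied to the trivial cobordism $X = S^3 \times I$.

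Suppose $K$ and $K'$ are smoothly concordant, via a genus-zero cobordism $S \subset S^3 \times I$. The hypotheses of \Cref{thm: higher real 10/8} are then free: $H_2(S^3 \times I, \partial(S^3 \times I); \mathbb{Z}) = 0$ forces $[S] = 0$, which is trivially divisible by $2^k$, and since $S^3 \times I$ is spin the characteristic condition $\mathrm{PD}(w_2(X)) \equiv [S]/2^k \pmod{2}$ holds automatically. The key computation is then that for such an $S$, the branched covers satisfy
\[
b^+(\Sigma_{2^k}(S)) \;=\; b^+(\Sigma_{2^{k-1}}(S)) \;=\; 0 \qquad\text{and}\qquad \sigma(\Sigma_{2^k}(S)) \;=\; 0.
\]
Granting this, \Cref{thm: higher real 10/8} reduces to $0 \le \kappa_R^{(k)}(K') - \kappa_R^{(k)}(K)$; swapping the roles of $K$ and $K'$ by reversing $S$ gives the opposite inequality, whence $\kappa_R^{(k)}(K) = \kappa_R^{(k)}(K')$. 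The same argument, using the analogous inequalities for $\delta_R$, $\underline{\delta}_R$, $\overline{\delta}_R$ from \cite{KMT:2023}, handles the remaining three invariants.

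For the vanishing at the unknot $U$, observe that $\Sigma_{2^k}(U) \cong S^3$ with $\tau$ the standard involution (whose fixed set is an unknot) and $\mathfrak{s}_0$ the standard spin structure. The real Seiberg--Witten Floer spectrum $SWF_R^{(k)}(U)$ is then (a formal suspension of) the $\mathbb{Z}_4$-equivariant sphere spectrum, and the normalization conventions for $\kappa_R$, $\delta_R$, $\underline{\delta}_R$, $\overline{\delta}_R$ from \cite{KMT21, KMT:2023} force all four invariants to vanish on $U$.

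The most delicate step is the verification of the three homological vanishings. \Cref{lem: homological formula} is stated for surfaces whose boundary is a single knot, whereas our $S$ has two boundary components, one on each end of the cobordism. I would handle this either by extending the $G$-signature argument of \Cref{lem: homological formula} to the cobordism setting --- which produces exactly the contributions $\sigma^{(n)}(K)$ and $-\sigma^{(n)}(K')$, canceling by concordance invariance of Levine--Tristram signatures at $2^j$-th roots of unity --- or, equivalently, by capping off one end of $S$ with a fixed slice surface for $K$ in $B^4$, applying \Cref{lem: homological formula} to both the cap and to the union, and subtracting.
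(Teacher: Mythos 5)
Your proposal is correct and takes essentially the same route the paper intends: the corollary is stated as an immediate consequence of \Cref{thm: higher real 10/8} (and its unstated $\delta$-analogues) applied to the product cobordism $S^3\times I$, and your verification of the hypotheses, the two-sided application to get equality, and the normalization at the unknot are all exactly what is needed. One small caution in your treatment of the ``delicate step'': the cleanest way to see $b^+(\Sigma_{2^j}(S))=\sigma(\Sigma_{2^j}(S))=0$ is the classical fact that for $n$ a prime power, the $n$-fold cyclic branched cover of $S^3\times I$ along a concordance is a $\mathbb{Z}[1/n]$-homology (hence $\mathbb{Q}$-homology) cobordism, so its rational second homology already vanishes; your two alternative routes both work, but in the capping option the phrase ``a fixed slice surface for $K$'' should read ``any smooth, properly embedded surface in $B^4$ bounded by $K$'' (e.g., a pushed-in Seifert surface), since $K$ need not be slice, and the genus terms then cancel after applying \Cref{lem: homological formula} to the cap and to the capped concordance and subtracting via Novikov additivity and additivity of $b^+$ across the $\mathbb{Q}$-homology sphere $\Sigma_{2^j}(K)$.
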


\section{The computation for torus knots}

\subsection{Computation via Mrowka--Ozsv\'ath--Yu}
Throughout this subsection, we fix positive integers $k$, $m$, and $q$ such that $m$ and $q$ are odd and coprime. Let us consider the covering space
\[
Y = \Sigma_{2^k}(T_{2^k \cdot m,\, q}),
\]
with covering transformation $\tau$, and let $\mathfrak{s}_0$ denote the unique spin structure on $Y$. We view $Y$ as the total space of the Seifert fibration
\[
Y = \Sigma(e;\, (a_1, b_1),\, \dots,\, (a_n, b_n)),
\]
where
\[
\{(a_1, b_1), \dots, (a_n, b_n)\} = \{ (m,\,-\alpha),\, \underbrace{(q,\,-\beta),\, \dots,\, (q,\,-\beta)}_{2^k} \}.
\]
Here, $\alpha$ and $\beta$ are the unique integers satisfying the following conditions:
\begin{itemize}
    \item $0 < \alpha < m$ and $0 < \beta < q$;
    \item $q\alpha \equiv 1 \pmod{m}$ and $m\beta \equiv 1 \pmod{q}$.
\end{itemize}

We consider the 2-orbifold $\check{S}$ obtained as the quotient $Y / S^1$. Since the branched covering action commutes with the $S^1$-action, the $\Z_{2^k}$-action descends to a well-defined action on $\check{S}$, denoted by
\[
\bar{\tau} \colon \check{S} \to \check{S}.
\]
Let $\pi$ denote the projection map $Y \to \check{S}$.

\begin{lem}
    In the setting above, the map $\bar{\tau}$ is a rotation of angle $2\pi / 2^k$ about the $z$-axis of $\check{S}$, fixing the north pole corresponding to the singular fiber of type $(m,\,-\alpha)$. When $k = 0$, the action is trivial.
\end{lem}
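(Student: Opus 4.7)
The plan is to reduce the statement to the explicit description of the $\mathbb{Z}_{2^k}$-action on the Seifert base orbifold already carried out in \Cref{subsec: brieskorn k_m_q}.

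First, I would invoke the equivariant identification $(Y,\tau)\cong(\partial W_{\Gamma_{k,m,q}},\tau)$ supplied by \Cref{lem: quotient is torus knot} together with the commuting $S^1$- and $\mathbb{Z}_{2^k}$-actions constructed on $W_{\Gamma_{k,m,q}}$. Commutativity guarantees that $\mathbb{Z}_{2^k}$ descends to a well-defined action on $\check{S}=Y/S^1$, which, under the above identification, is intertwined with the action on the quotient orbifold $\mathcal{O}=\partial W_{\Gamma_{k,m,q}}/S^1$ analyzed in \Cref{subsec: brieskorn k_m_q}.

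Next, I would read off $\bar\tau$ in the explicit open cover $\mathcal{O}=U\cup V$ from \Cref{subsec: brieskorn k_m_q}. On $U\cong\mathbb{R}^2/\mathbb{Z}_m$ the generator acts by $v\mapsto e^{2\pi i/2^k}v$, a rotation through angle $2\pi/2^k$ fixing the cone point at the origin, which is precisely the image in $\check S$ of the singular fiber of type $(m,-\alpha)$. On $V$, the generator rotates the disk through the same angle $2\pi/2^k$, fixing its center and cyclically permuting the $2^k$ cone points of order $q$. Since these local rotations agree on the annulus $U\cap V\cong S^1\times\mathbb{R}$, they glue to a single orientation-preserving diffeomorphism of the underlying $S^2$ of order exactly $2^k$, with exactly two fixed points, one at the $(m,-\alpha)$ cone point and one at the center of $V$.

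To finish, I would observe that any orientation-preserving finite-order diffeomorphism of $S^2$ with exactly two fixed points is smoothly conjugate to a rigid rotation about the axis joining those points, and the rotation angle read off in either local chart is $2\pi/2^k$. Declaring the cone point of order $m$ to be the north pole gives the claimed description, and the case $k=0$ is immediate since $\mathbb{Z}_{2^0}$ is trivial. I do not foresee a real obstacle here: the construction in \Cref{subsec: brieskorn k_m_q} was arranged precisely so that $\bar\tau$ is manifestly a rotation of this form, and the only bookkeeping required is to verify that the identification $\check S\cong\mathcal{O}$ carries the deck transformation to the generator of $\mathbb{Z}_{2^k}$ singled out there.
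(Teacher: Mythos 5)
Your proposal is correct and follows essentially the same route as the paper, which disposes of the lemma in one line by citing \Cref{rem: holomorphicity of action on orbifold} and \Cref{lem: quotient is torus knot}; you are simply unpacking what those citations supply, namely the equivariant identification $(Y,\tau)\cong(\partial W_{\Gamma_{k,m,q}},\tau)$ and the explicit rotational description of the descended action on the charts $U$ and $V$ of $\mathcal{O}$. The only cosmetic difference is that where you invoke the general fact that a finite-order orientation-preserving diffeomorphism of $S^2$ with two fixed points is conjugate to a rigid rotation, the paper instead leans on the holomorphicity of the action (Remark on holomorphicity of the action on the orbifold); in truth neither extra step is really needed, since the action is already given chart-by-chart as a rotation through $2\pi/2^k$, so gluing over $U\cap V$ produces the rotation directly.
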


\begin{proof}
    This follows from \Cref{rem: holomorphicity of action on orbifold} and \Cref{lem: quotient is torus knot}.
\end{proof}

With this setting, we claim the following:

\begin{prop}\label{sphericality}
    We have
    \[
    SWF_R(Y, \mathfrak{s}_0, \tau^{2^{k-1}}) \simeq \left(\mathbb{C}^{m}\right)^+
    \]
    for some number $m \in \mathbb{Q}$, as a stable $\mathbb{Z}_4$-equivariant homotopy type.
\end{prop}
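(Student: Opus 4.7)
The plan is to realize $SWF_R(Y,\mathfrak{s}_0,\tau^{2^{k-1}})$ via an explicit Conley index built from the $O(2)$-equivariant lattice Floer homotopy model of \cite{KPT:2024} (which refines \cite{DSS2023}), using the Mrowka--Ozsv\'ath--Yu description \cite{MOY96} of the Seiberg--Witten moduli space of the Seifert rational homology sphere $Y$, and then to show that the combined $\mathbb{Z}_4$-action induced by $j\in\mathrm{Pin}(2)$ and the involution $\tau^{2^{k-1}}$ collapses every irreducible stratum, leaving only a single representation sphere.

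First I would set up a $(\mathrm{Pin}(2)\times \langle \tau^{2^{k-1}}\rangle)$-equivariant finite-dimensional approximation of the Seiberg--Witten flow on $(Y,\mathfrak{s}_0)$, and use \cite{MOY96} to enumerate its irreducible critical points as holomorphic data on the base orbifold $\check S=Y/S^1$. By the preceding lemma together with \Cref{rem: holomorphicity of action on orbifold}, the induced action $\bar\tau\colon \check S\to\check S$ is a holomorphic rotation by $2\pi/2^k$ fixing only the north pole (the singular fiber of type $(m,-\alpha)$), and its $2^{k-1}$-th iterate is rotation by $\pi$, whose fixed locus is confined to the rotation axis. Since all irreducible Mrowka--Ozsv\'ath--Yu critical points correspond to non-constant holomorphic sections, the involution $\tau^{2^{k-1}}$ acts freely on the irreducible locus and freely on all gradient trajectories connecting irreducibles. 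The reducible, by contrast, is $\tau^{2^{k-1}}$-invariant and is the unique critical point.

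Next I would transport this picture through the $O(2)$-equivariant lattice Floer homotopy model of \cite{KPT:2024} attached to the almost-rational negative-definite plumbing graph $\Gamma_{k,m,q}$ of \Cref{subsec: brieskorn k_m_q}. The involution $\tau^{2^{k-1}}$ acts on this lattice model by the combinatorial symmetry induced from the $\mathbb{Z}_{2^k}$-action on $\Gamma_{k,m,q}$ (swapping the $2^k$ legs of Euler numbers $-b_1,\dots,-b_s$ pairwise and fixing the central and $-a_i$ legs), and composing with the $j$-action yields the desired $\mathbb{Z}_4=G$-action defining $SWF_R(Y,\mathfrak{s}_0,\tau^{2^{k-1}})\in\mathfrak{C}_G$. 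Because $\tau^{2^{k-1}}$ is already free on irreducible cells, the combined $G$-action is free on every nonreducible cell of the lattice model, so each such cell lies in a free $G$-orbit.

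Finally I would apply standard equivariant simplifications of the Conley index to collapse each free $G$-orbit, leaving only a neighborhood of the reducible; by the local model of Seiberg--Witten flow near the reducible, this neighborhood is $G$-equivariantly equivalent to $(\mathbb{C}^m)^+$, where the rational quantity $m$ is the correction determined by the spectral flow of the Dirac operator on $Y$ together with the Seifert data (computable from $\Gamma_{k,m,q}$ and the $\bar\mu$-invariant computation of \Cref{lem:mu-bar-special-case} in the relevant specialization). The hardest step will be this last equivariant cancellation: one must verify that the lattice cancellations of \cite{KPT:2024,DSS2023} preserve the $\mathbb{Z}_4$-structure and genuinely collapse the free orbits rather than producing nontrivial $G$-equivariant gluing, which requires carefully matching the $\bar\tau$-orbit structure on irreducible Mrowka--Ozsv\'ath--Yu solutions to the combinatorial leg-swap symmetry of $\Gamma_{k,m,q}$.
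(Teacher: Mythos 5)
Your opening idea — analyzing the Mrowka--Ozsv\'ath--Yu moduli spaces and the induced action on irreducibles to reduce to the reducible alone — is the right starting point and is how the paper proceeds. But the key mechanism you propose for why irreducibles vanish is wrong, and the subsequent lattice-model detour is both unnecessary and insufficiently justified.

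The central gap: you claim that because irreducible MOY critical points correspond to ``non-constant holomorphic sections'' and $\bar\tau^{2^{k-1}}$ is a rotation by $\pi$ whose fixed locus on $\check{S}$ is only the rotation axis, the involution ``acts freely on the irreducible locus.'' This inference is false. A non-constant holomorphic section (or the corresponding effective orbifold divisor) can easily be invariant under the geometric pullback $\bar\tau_0^*$ — for instance, any divisor of the form $\{p\}+\{\bar\tau_0 p\}$, or a section with a zero at a fixed point of the correct multiplicity, is fixed. So geometric non-constancy does not give freeness. Moreover, even if the pullback action were free, that is not the relevant statement: $SWF_R$ is built from the fixed-point set of an \emph{anti-complex-linear} lift $\widetilde{\tau}_0$ of $\tau^{2^{k-1}}$, not from the quotient by a free action. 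What the paper actually shows is that this anti-linear lift — which composes $\bar\tau_0^*$ with conjugation $E_0 \to E_0^{-1}\cong E_0\otimes K_{\check{S}^2}^{-1}$ — interchanges the two components $M_v^+$ (solutions $(\alpha_0,0)$) and $M_v^-$ (solutions $(0,\beta_0)$). Since $M_v^+$ and $M_v^-$ are disjoint, the fixed-point set $(M_v^*)^{\widetilde{\tau}_0^*}$ is automatically empty; the geometry of the rotation on $\check{S}$ plays no role in this step. This anti-linear swap is the crux of the argument and is missing from your proposal.

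Two further issues. First, the passage to the $O(2)$-equivariant lattice Floer model and the plumbing graph $\Gamma_{k,m,q}$ is not needed for this proposition: the paper proves sphericality entirely in the gauge-theoretic picture (MOY moduli + non-degeneracy of $\theta$ from~\cite{MOY96}), reserving the lattice model for the later computation of the numerical value of $\kappa_R^{(k)}$ in \Cref{thm: higher sphericity of torus knots}. Second, even granting freeness, ``collapsing free $G$-orbits'' is not a standard simplification of a $G$-equivariant Conley index — free cells can contribute nontrivially to the homotopy type, and one would have to justify equivariant cancellations of attaching maps, which you flag as ``the hardest step'' but do not resolve. The paper avoids this entirely: once $(M_v^*)^{\widetilde{\tau}_0^*}=\emptyset$, the only critical point in the real theory is the non-degenerate reducible $\theta$, and the real Floer homotopy type is immediately a representation sphere.
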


\begin{proof}
We use the description of Seiberg--Witten moduli spaces due to Mrowka--Ozsv\'ath--Yu~\cite{MOY96}. The argument is analogous to the case of odd torus knots given in~\cite[Theorem 3.58]{KMT21}. For precise details, see~\cite{MOY96}.

First, we note that the classification of orbifold line bundles over $\check{S}^2$ is given by an injective map
\[
\operatorname{Pic}^t(\check{S}^2(a_1, \dots, a_n)) \to \mathbb{Q} \oplus \mathbb{Z}_{a_1} \oplus \cdots \oplus \mathbb{Z}_{a_n},
\]
given by
\[
L \mapsto (e, b_1, \dots, b_n),
\]
which is called the \emph{Seifert invariant} of the line bundle $L$. The number $e$ is called the \emph{degree} of $L$, denoted $\deg(L) = e$.
The \emph{orbifold canonical bundle} $K_{\check{S}^2}$ of $\check{S}^2$ is described by the invariant
\[
(0, a_1 - 1, \dots, a_n - 1).
\]

We first fix the following geometric data:
\begin{itemize}
    \item Let $i\eta$ denote the connection $1$-form of the circle bundle $Y \to \check{S}^2$, and let $g_{\check{S}^2}$ be an orbifold metric on $\check{S}^2$ with constant curvature. Then we endow $Y$ with the metric
    \[
    g_Y = \eta^2 + \pi^* g_{\check{S}^2},
    \]
    under which the tangent bundle $TY$ admits an orthogonal splitting
    \begin{align}\label{TY}
        TY = \mathbb{R} \oplus \pi^*(T\check{S}^2).
    \end{align}
    We may take $g_{\check{S}^2}$ to be $\tau$-invariant by averaging.

    \item Let $\nabla$ denote the connection on $Y$ canonically induced from the decomposition~\eqref{TY}.

    \item Let $K_{\check{S}^2}$ denote the canonical line bundle of $\check{S}^2$.

    \item Let $W_c$ denote the spin$^c$ structure on $Y$ defined as
    \[
    \underline{\mathbb{C}} \oplus \pi^* K_{\check{S}^2}^{-1}.
    \]

    \item Fix an orbifold Hermitian line bundle $E_0$ over $\check{S}^2$ with Seifert invariant
    \[
    \left(0,\, \frac{a_1 - 1}{2},\, \dots,\, \frac{a_n - 1}{2} \right).
    \]
    By construction, we have
    \[
    E_0^{-1} \cong E_0 \otimes K_{\check{S}^2}^{-1}.
    \]
    This implies that the spin$^c$ structure
    \[
    S_Y := \pi^* E_0 \otimes W_c
    \]
    on $Y$ is spin, and hence is isomorphic to $\mathfrak{s}_0$.
\end{itemize}

In this setting, Mrowka--Ozsv\'ath--Yu~\cite{MOY96} establish the following natural correspondence:
\[
\bigcup_{\substack{0 \leq \deg (E) < \frac{\deg \left(K_{\check{S}^2}\right)}{2} \\
[E] = [E_0] \in \operatorname{Pic}^t(\check{S}^2)/\mathbb{Z}[L_Y]}} C_+(E) \sqcup C_-(E)
\xrightarrow{\ \Phi\ } M^*_v = M^+_v \sqcup M^-_v \xrightarrow{\ \pi^*\ } M^*_Y(\pi^* E_0 \otimes W_c),
\]
where:
\begin{itemize}
    \item $M^*_Y(\pi^* E_0 \otimes W_c)$ is the space of irreducible critical points of the Chern--Simons--Dirac functional associated to the metric $g_Y$ and the connection for the unique spin structure on $Y$.
    
    \item $M^*_v$ is the moduli space of solutions to the vortex equations:
    \begin{align*}
        \begin{cases}
            2 F_{B_0} - F_{K_{\check{S}^2}} = i\left( |\alpha_0|^2 - |\beta_0|^2 \right) \mu_{\check{S}^2}, \\
            \bar{\partial}_{B_0} \alpha_0 = 0,\quad \bar{\partial}_{B_0}^* \beta_0 = 0, \\
            \alpha_0 = 0 \quad \text{or} \quad \beta_0 = 0,
        \end{cases}
    \end{align*}
    where:
    \begin{itemize}
        \item $\mu_{\check{S}^2}$ is the volume form induced by $g_{\check{S}^2}$,
        \item $\alpha_0$ and $\beta_0$ are orbifold sections of $E_0$ and $E_0^{-1} \otimes K_{\check{S}^2}$, respectively,
        \item $B_0$ is a connection on $E_0$.
    \end{itemize}
    The moduli space is taken modulo orbifold gauge transformations over $\check{S}^2$. The subspaces $M^+_v$ and $M^-_v$ consist of solutions of the form $(\alpha_0, 0)$ and $(0, \beta_0)$, respectively.

    \item $C_+(E)$ denotes the space of effective orbifold divisors associated to the line bundle $E$. The component $C_+(E)$ corresponds to $M^+_v$, while $C_-(E)$ is a copy of $C_+(E)$ corresponding to $M^-_v$.

    \item The symbol $\deg$ denotes the orbifold degree of the line bundles, and $L_Y$ is the line bundle determined by the circle bundle $\pi \colon Y \to \check{S}^2$.
\end{itemize}

Let us denote by $\tau_0 := \tau^{2^{k-1}} \colon Y \to Y$.  
Since both the metric $g_Y$ and the connection $\nabla$ are $\tau_0$-invariant, if we take an anti-complex linear lift 
\[
\widetilde{\tau}_0 \colon S_Y \to S_Y
\]
of $\tau_0$ that is compatible with Clifford multiplication, then we obtain a natural involution
\[
\widetilde{\tau}_0^* \colon M^*_Y(\pi^* E_0 \otimes W_c) \to M^*_Y(\pi^* E_0 \otimes W_c).
\]
What we aim to compute is the fixed point set 
\[
M^*_Y(\pi^* E_0 \otimes W_c)^{\widetilde{\tau}_0^*}.
\]
On the other hand, since the correspondence 
\[
M^*_v = M^+_v \sqcup M^-_v \xrightarrow{\ \pi^*\ } M^*_Y(\pi^* E_0 \otimes W_c)
\]
is natural, we need to consider the same problem for the moduli space of vortices $M^*_v$.
Now observe that the induced map on the base, 
\[
\bar{\tau}_0 := \bar{\tau}^{2^{k-1}} \colon \check{S}^2 \to \check{S}^2,
\]
is a rotation by $\pi$ around the $z$-axis. It follows that:
\begin{align}\label{ooo}
\begin{cases}
    (\bar{\tau}_0)^* E_0 \cong E_0, \\
    (\bar{\tau}_0)^* \left(E_0 \otimes K^{-1}_{\check{S}^2} \right) \cong E_0 \otimes K^{-1}_{\check{S}^2}.
\end{cases}
\end{align}

By composing the conjugation maps $E_0 \to E_0^{-1} \cong E_0 \otimes K^{-1}_{\check{S}^2}$ and 
$E_0 \otimes K^{-1}_{\check{S}^2} \to E_0^{-1} \otimes K_{\check{S}^2} \cong E_0$ with suitable choices of isomorphisms from~\eqref{ooo}, we obtain an order-two anti-complex linear lift of $\tau_0$:
\[
\widetilde{\tau}_0 \colon E_0 \oplus E_0 \otimes K^{-1}_{\check{S}^2} \to E_0 \oplus E_0 \otimes K^{-1}_{\check{S}^2}.
\]
One can verify that this action interchanges $M_v^+$ and $M_v^-$, i.e., $\widetilde{\tau}_0$ maps $M_v^\pm$ to $M_v^\mp$. We now consider the commutative diagram:
\[
\begin{CD}
S_Y = \pi^* E_0 \otimes W_c @>{\pi}>> E_0 \oplus E_0 \otimes K^{-1}_{\check{S}^2} \\
@V{\widetilde{\tau} := \pi^* \circ \widetilde{\tau}_0}VV @V{\widetilde{\tau}_0}VV \\
S_Y = \pi^* E_0 \otimes W_c @>{\pi}>> E_0 \oplus E_0 \otimes K^{-1}_{\check{S}^2}
\end{CD}
\]
which induces the corresponding diagram on moduli spaces:
\[
\begin{CD}
M_v^* @>{\pi^*}>> M_Y^*(\pi^* E_0 \otimes W_c) \\
@V{\widetilde{\tau}_0^*}VV @V{\widetilde{\tau}^*}VV \\
M_v^* @>{\pi^*}>> M_Y^*(\pi^* E_0 \otimes W_c)
\end{CD}
\]
Since we have shown that the fixed point set $(M_v^*)^{\widetilde{\tau}_0^*}$ is empty, it follows that
\[
M_Y^*(\pi^* E_0 \otimes W_c)^{(\pi^* \circ \widetilde{\tau}_0)^*} = \emptyset.
\]
The reducible locus is given by
\[
\{\theta\} = \operatorname{Pic}(\check{S}^2) \to M_Y^{\mathrm{red}}(\pi^* E_0 \oplus W_c),
\]
and is preserved by the action of $\widetilde{\tau}^*$. Here, $\theta$ denotes the unique reducible solution corresponding to the spin structure.

Moreover, it was shown in~\cite[Proposition 5.8.4]{MOY96} that $\theta$ is non-degenerate, meaning that the kernel of the operator
\[
(* d,\ \slashed{\partial}_C) \colon \operatorname{Ker} d^* \times \Gamma(S_Y) \to \operatorname{Ker} d^* \times \Gamma(S_Y)
\]
is zero at $(C, 0) = \theta$. Here, $*$ denotes the Hodge star operator, $S_Y$ is the spinor bundle associated to the spin structure on $Y$, $d^*$ is the formal adjoint of the exterior derivative $d$, and $\slashed{\partial}_C$ denotes the Dirac operator with respect to $g_Y$, the connection $C$, and $\nabla$.
Since all data are $\widetilde{\tau}^*$-equivariant, taking the fixed point set still yields trivial kernel, i.e., the kernel of the operator
\[
(* d^{\widetilde{\tau}^*},\ \slashed{\partial}_C^{\widetilde{\tau}^*}) \colon 
\left( \operatorname{Ker} d^* \times \Gamma(S_Y) \right)^{\widetilde{\tau}^*} \to 
\left( \operatorname{Ker} d^* \times \Gamma(S_Y) \right)^{\widetilde{\tau}^*}
\]
is also zero.

Now, by the construction of the real Floer homotopy type $SWF_R$ given in~\cite{KMT21}, we conclude that
\[
SWF_R(Y, \tau^{2^{k-1}}, \mathfrak{s}_0) = \left(\mathbb{C}^{m}\right)^+
\]
with respect to the fixed data $(g_Y, \nabla)$ for some $m \in \mathbb{Q}$.

On the other hand, the geometric data $(g_Y, \nabla)$ can be connected by a $1$-parameter family of $\tau$-invariant data to $(g_Y, \nabla_{g_Y})$, where $\nabla_{g_Y}$ is the Levi--Civita connection of $g_Y$. By the usual proof of the invariance of Floer homotopy types, the above computation remains valid even for the Levi--Civita connection. This completes the proof.
\end{proof}

\subsection{Computation via Dai-Sasahira-Stoffregen}
Let $k$ be a positive integer. Let us consider the covering space 
\[
Y = \Sigma_{2^k}(T_{2^k\cdot m,\, q})
\]
with the involution $\tau$, where $m$ and $q$ are odd integers and coprime. We claim the following:

\begin{thm} \label{thm: higher sphericity of torus knots}
Let $k, p, q$ be positive integers, where $p$ and $q$ are odd and coprime. Then
\[
SWF_R^{(k)}(T_{2^k p,\, q}) = \left( \mathbb{C}^{\frac{1}{2} \bar\mu(\Sigma(2^k,\, 2^k p,\, q))} \right)^+,
\]
and in particular,
\[
\kappa_R^{(k)}(T_{2^k p,\, q}) = -\frac{1}{2} \bar\mu(\Sigma(2^k,\, 2^k p,\, q)).
\]
\end{thm}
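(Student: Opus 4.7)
The plan is to bootstrap on \Cref{sphericality}, which already gives
\[
SWF_R^{(k)}(T_{2^k p, q}) \simeq (\mathbb{C}^m)^+
\]
as a $\mathbb{Z}_4$-equivariant stable homotopy type for some rational $m$. Once $m$ is identified, the formula for $\kappa_R^{(k)}$ will follow immediately from the definition of that invariant together with the $k$-strong sphericality just proved, so the remaining content of the theorem is the identification $m = \tfrac{1}{2}\bar\mu(\Sigma(2^k, 2^k p, q))$.

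To compute $m$, I would exploit the observation that a representation sphere of the form $(\mathbb{C}^\bullet)^+$ is determined up to $\mathbb{Z}_4$-equivariant equivalence by any single numerical invariant that separates such spheres, for example the $O(2)$-equivariant Fr\o yshov invariant of the underlying spectrum. My plan is to compute that invariant combinatorially from the almost-rational plumbing $\Gamma_{k,p,q}$ constructed in \Cref{subsec: brieskorn k_m_q}, using the $O(2)$-equivariant lattice Floer homotopy type of Dai--Sasahira--Stoffregen \cite{DSS2023} in the form developed in \cite{KPT:2024}. For AR plumbings this lattice model is known to produce a representation sphere whose exponent is read off from the spherical Wu class $w$ via the Neumann--Siebenmann formula $\bar\mu = (\sigma(\Gamma)-w^2)/8$. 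Matching this with $m$ will force $m = \tfrac{1}{2}\bar\mu$, and the overall structure of the argument will closely parallel the computation of $\kappa_R$ for odd torus knots carried out in \cite{KPT:2024}.

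The step I expect to require the most care is the compatibility of the two $\mathbb{Z}_4$-actions in play: on one side the $2^{k-1}$-st iterate of the deck transformation of $\Sigma_{2^k}(T_{2^k p,q})$ that defines $\kappa_R^{(k)}$, and on the other side the natural $\mathbb{Z}_4 \subset O(2)$-subaction on the lattice model from \cite{DSS2023, KPT:2024}. Fortunately, the Mrowka--Ozsv\'ath--Yu moduli-space description used in the proof of \Cref{sphericality} already pins down how the relevant involution acts on the Seiberg--Witten moduli space of our Seifert manifold, so the remaining work is to match this with the combinatorial $\mathbb{Z}_4$-action on the DSS lattice model; once that is in place, the identification $m = \tfrac{1}{2}\bar\mu$ becomes a formal consequence of the AR lattice combinatorics.
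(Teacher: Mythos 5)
Your proposal follows the same route as the paper: Proposition~\ref{sphericality} yields $k$-strong sphericality, so $\kappa_R^{(k)}$ coincides with $\delta_R^{(k)}$, and the latter is computed combinatorially from the AR plumbing $\Gamma_{k,m,q}$ via the $O(2)$-equivariant lattice model, ending up as $-\tfrac{1}{2}\bar\mu$. The step you flag as delicate---matching the $\mathbb{Z}_4$-action with the combinatorial structure of the DSS lattice model---is precisely where the paper concentrates its work: it constructs an almost $I$-invariant path carrying the lattice homology (verifying that the spherical Wu class can be realized as a sum of pairwise disjoint, setwise $\tau$-fixed spheres, split into three cases), invokes Proposition~\ref{o2homotopy} and Corollary~\ref{cor: froyshov is NS} to obtain $\delta_R^{(k)} = \tfrac{1}{16}(c_1(\mathfrak{s}_1)^2 - \sigma(W_\Gamma))$, and then separately checks that $c_1(\mathfrak{s}_1)^2 = \mathrm{Wu}(\Gamma,\mathfrak{s})^2$ because $\mathfrak{s}_1$ lies in the $J$-invariant lattice cube with no two Wu-class nodes adjacent. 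Your sketch glosses over these two verifications, in particular the claim that ``the exponent is read off from the spherical Wu class'' is not automatic from the lattice model alone; but as a blueprint it is accurate and essentially identical to the proof in the paper.
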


We will compute the invariant 
$\delta^{(k)}_R(K) = \delta_R(\Sigma_{2^k}(K),\, \mathfrak{s}_0,\, \tau)$
using the following proposition, which is essentially proven in~\cite{KPT:2024}.

\begin{prop}
\label{o2homotopy}  
Fix $k \geq 1$. Let $K$ be a knot in $S^3$, and let $\Sigma_{2^k}(K)$ denote its $2^k$-fold branched cover. Suppose there exists an almost-rational plumbing graph $\Gamma$ and a diffeomorphism $\partial W_\Gamma \cong \Sigma_{2^k}(K)$, where $W_\Gamma$ denotes the plumbed 4-manifold defined by $\Gamma$, such that the $2^{k-1}$ power of the deck transformation on $\Sigma_{2^k}(K)$ extends smoothly to an involution $\tau$ on $W_\Gamma$.
Assume further that there exists an almost $I$-invariant path $\gamma$ that carries the lattice homology of $(\Gamma, \mathfrak{s}_0)$, where $\mathfrak{s}_0$ denotes the unique spin structure on $\Sigma_{2^k}(K)$. Then there exists an $O(2)$-equivariant map
\[
\mathcal{T}^{O(2)} \colon \mathcal{H}(\gamma, \mathfrak{s}_0) \to SWF(\Sigma_{2^k}(K), \mathfrak{s}_0)
\]
which is an $S^1$-equivariant homotopy equivalence, with respect to a certain $O(2)$-action on $\mathcal{H}(\gamma, \mathfrak{s}_0)$.\footnote{Here, $\mathcal{H}(\gamma, \mathfrak{s}_0)$ denotes the \emph{$S^1$-path homotopy type} of $\gamma$; see~\cite[Definition 3.2]{DSS2023} for its definition.} 
\end{prop}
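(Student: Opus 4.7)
The plan is to adapt the argument of \cite{KPT:2024}, which establishes the analogous statement in the case $k=1$, by taking the ambient involution to be $\tau^{2^{k-1}}$ — the $2^{k-1}$-th iterate of the deck transformation on the $2^k$-fold branched cover. This is an honest involution on $\partial W_\Gamma \cong \Sigma_{2^k}(K)$ and, by hypothesis, extends smoothly to all of $W_\Gamma$; these are the only structural inputs that the $k=1$ argument uses about $\tau$.

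First I would invoke the construction of Dai--Sasahira--Stoffregen \cite{DSS2023}, which, under the given almost-rationality of $\Gamma$ and almost $S^1$-invariance of $\gamma$, provides an $S^1$-equivariant homotopy equivalence
\[
\mathcal{T} \colon \mathcal{H}(\gamma,\mathfrak{s}_0) \longrightarrow SWF(\partial W_\Gamma,\mathfrak{s}_0)
\]
built via a finite-dimensional approximation of the Seiberg--Witten flow matched to a Morse-theoretic Conley index model assembled from the plumbing data of $\Gamma$. To upgrade this to an $O(2)$-equivariant map, I would use the extension of $\tau^{2^{k-1}}$ to $W_\Gamma$ to induce compatible $I$-actions on both sides of $\mathcal{T}$: on the Seiberg--Witten side this reproduces the antilinear involution defining the real Floer homotopy type, while on the lattice side the almost $I$-invariance of $\gamma$ ensures that the action of $\tau^{2^{k-1}}$ on the plumbing spheres descends to a well-defined involution on $\mathcal{H}(\gamma,\mathfrak{s}_0)$. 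Together with the already-present $S^1$-action, these generate the desired $O(2)$-actions.

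The main step, and the one I expect to be the chief obstacle, is to verify that $\mathcal{T}$ intertwines these two $I$-actions up to $S^1$-equivariant homotopy. This reduces to tracking the $\mathbb{Z}_2$-action through each stage of the DSS construction — the handle model built from $\Gamma$, the finite-dimensional approximation of the Seiberg--Witten configuration space, and the comparison map between the resulting Conley indices — and checking that the natural identifications appearing there are $\tau^{2^{k-1}}$-equivariant. Since the verifications are carried out in detail for $k=1$ in \cite{KPT:2024} and rely only on the properties that the involution extends smoothly to $W_\Gamma$ and preserves $\gamma$ up to homotopy, both of which are guaranteed by the hypotheses, the argument transfers verbatim to general $k$. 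The $S^1$-equivariant homotopy-equivalence conclusion is then inherited from \cite{DSS2023}.
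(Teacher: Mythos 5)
Your proposal is correct and takes essentially the same route as the paper: both observe that the argument of \cite[Theorem 1.3]{KPT:2024} — building the $S^1$-equivariant equivalence from \cite{DSS2023} and then promoting it to an $O(2)$-equivariant map by tracking the involution through the Conley-index construction — uses only that the involution extends smoothly to $W_\Gamma$ and that $\gamma$ is almost $I$-invariant, never that $\partial W_\Gamma$ arises specifically as a double branched cover, so the same proof applies verbatim to the $2^{k-1}$-st power of the deck transformation on the $2^k$-fold cover. The paper's actual proof is just this observation stated in one sentence, with all details delegated to \cite{KPT:2024}.
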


We briefly review the definition of an almost $I$-invariant path.

\begin{defn}
Let $\Gamma$ be an almost-rational negative-definite plumbing graph, and let $W_\Gamma$ denote the associated plumbed 4-manifold. Suppose $\tau$ is an orientation-preserving involution on $W_\Gamma$ with codimension-two fixed point set, and let $\mathfrak{s}$ be a spin structure on $\partial W_\Gamma$ such that $\tau^* \mathfrak{s} = \bar{\mathfrak{s}} = \mathfrak{s}$.

An \emph{almost $I$-invariant path} is a sequence of spin$^c$ structures on $W_\Gamma$,
\[
\gamma = \{ \mathfrak{s}_{-n}, \dots, \mathfrak{s}_{-1}, \mathfrak{s}_1, \dots, \mathfrak{s}_n \},
\]
satisfying the following conditions:
\begin{itemize}
    \item For all $i = 1, \dots, n$, the restriction $\mathfrak{s}_i|_{\partial W_\Gamma} = \mathfrak{s}$ and $\mathfrak{s}_{-i}|_{\partial W_\Gamma} = \mathfrak{s}$;
    
    \item For all $i = 1, \dots, n$, we have $\mathfrak{s}_{-i} = \tau^* \bar{\mathfrak{s}}_i$;
    
    \item For each $i = 1, \dots, n$, we have
    \[
    \mathfrak{s}_{i+1} - \mathfrak{s}_i = \operatorname{PD}[S]
    \]
    for a sphere $S \subset W_\Gamma$ representing a node of $\Gamma$;
    
    \item For each $i = 1, \dots, n$, we have
    \[
    \mathfrak{s}_{-i} - \mathfrak{s}_{-i-1} = \operatorname{PD}[S]
    \]
    for a sphere $S \subset W_\Gamma$ representing a node of $\Gamma$;
    
    \item The difference between the two central terms is given by
    \[
    \mathfrak{s}_1 - \mathfrak{s}_{-1} = \sum_{S \in \mathcal{S}} \operatorname{PD}[S],
    \]
    for a finite collection $\mathcal{S}$ of pairwise disjoint smoothly embedded spheres $S \subset W_\Gamma$ with $[S]^2 < 0$, such that each $S$ is setwise fixed by $\tau$, and $\tau|_S$ is either the identity or an orientation-preserving involution fixing two points.
\end{itemize}
\end{defn}
Recall that given a $\mathrm{spin}^c$ structure $\mathfrak{s}$ on $\partial W_\Gamma$, a path of $\mathrm{spin}^c$ structures on $W_\Gamma$ is said to \emph{carry the lattice homology of $(\Gamma, \mathfrak{s})$} if the natural inclusion map
\[
\mathcal{H}(\gamma, \mathfrak{s}) \hookrightarrow \mathcal{H}(\Gamma, \mathfrak{s})
\]
induces a chain homotopy equivalence on the $S^1$-equivariant Borel chain complexes.

\begin{proof}[Proof of \Cref{o2homotopy}]
The existence of the desired $O(2)$-equivariant map follows by essentially the same argument as in the proof of~\cite[Theorem 1.3]{KPT:2024}, since the description of $\partial W_\Gamma$ as a 2-fold branched cover is not used there. Therefore, we omit the details.
\end{proof}

\begin{cor} \label{cor: froyshov is NS}
    Under the same assumption of \cref{o2homotopy}, 
    \[
    \delta^{(k)}_R(K)= \frac{1}{16} \left(c_1(\mathfrak{s}_1)^2-\sigma (W_\Gamma)\right) .  \eqno\QEDB
    \]
\end{cor}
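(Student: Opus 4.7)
The plan is to derive \Cref{cor: froyshov is NS} directly from \Cref{o2homotopy} by reading off $\delta_R^{(k)}(K)$ from the $O(2)$-equivariant lattice path model $\mathcal{H}(\gamma, \mathfrak{s}_0)$. By definition, $\delta_R(\Sigma_{2^k}(K), \mathfrak{s}_0, \tau)$ is the normalized grading of the bottom $S^1$-equivariant class that survives in the $j$-fixed point subspectrum of $SWF(\Sigma_{2^k}(K), \mathfrak{s}_0)$, where $j \in O(2) \subset \Pin(2)$ is the generator of the order-two subgroup encoding the real structure. Since the map $\mathcal{T}^{O(2)}$ of \Cref{o2homotopy} is $O(2)$-equivariant and is an $S^1$-equivariant homotopy equivalence, it restricts to an $S^1$-equivariant equivalence on $j$-fixed subspectra, so the same Fr\o yshov-type invariant can be computed on the lattice side.

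Next I would identify the $j$-fixed part of $\mathcal{H}(\gamma, \mathfrak{s}_0)$. By the defining properties of an almost $I$-invariant path, $j$ exchanges the cells associated with $\mathfrak{s}_i$ and $\mathfrak{s}_{-i}$ for $i \geq 1$ through the relation $\mathfrak{s}_{-i} = \tau^* \bar{\mathfrak{s}}_i$, while its action on the sphere collection $\mathcal{S}$ that links $\mathfrak{s}_{-1}$ to $\mathfrak{s}_1$ is controlled by the permitted orientation-preserving involutions with at most two fixed points. Hence the $O(2)$-fixed cells of $\mathcal{H}(\gamma, \mathfrak{s}_0)$ concentrate at the center of the path, and the carrying hypothesis for $\gamma$ guarantees that the lowest such cell indeed detects the bottom of the $S^1$-equivariant tower, rather than being cancelled by a higher-dimensional cell.

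The final step is to read off the absolute grading of this bottom cell. In the lattice homotopy model of \cite{DSS2023} as used in \cite{KPT:2024}, the cell corresponding to a spin$^c$ structure $\mathfrak{s}$ on $W_\Gamma$ is placed in real degree $\tfrac{1}{16}\bigl(c_1(\mathfrak{s})^2 - \sigma(W_\Gamma)\bigr)$, which reflects the Atiyah--Singer index of the real Dirac operator on $(W_\Gamma, \mathfrak{s})$. Evaluating at $\mathfrak{s} = \mathfrak{s}_1$ and combining with the previous step immediately yields
\[
\delta_R^{(k)}(K) \;=\; \frac{1}{16}\bigl(c_1(\mathfrak{s}_1)^2 - \sigma(W_\Gamma)\bigr).
\]

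The main subtlety I anticipate is matching absolute grading conventions between the $O(2)$-equivariant lattice side and the real Seiberg--Witten Floer side, together with verifying that the reflection $j$ indeed acts as described on the central spin$^c$ structures. Both points are essentially already addressed in the proof of \cite[Theorem~1.3]{KPT:2024}, so once \Cref{o2homotopy} is in hand the remaining work reduces to symmetry-driven bookkeeping plus the standard real index calculation on the plumbed cap.
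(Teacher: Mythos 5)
The paper's own proof of this corollary is a one-line deferral to \cite[Theorem 4.7]{KPT:2024}, so your attempt to reconstruct the argument is reasonable in spirit, and your overall approach (compute $\delta_R^{(k)}$ on the lattice side via the $O(2)$-equivariant model) is the right one. However, there is a genuine gap at your second step.

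You assert that because $\mathcal{T}^{O(2)}$ is $O(2)$-equivariant and an $S^1$-equivariant homotopy equivalence, it ``restricts to an $S^1$-equivariant equivalence on $j$-fixed subspectra.'' This does not follow. First, the statement is not even well-posed: $j \in O(2)$ does not commute with $S^1$ (rather $jzj^{-1}=\bar z$), so the $j$-fixed point spectrum only inherits a residual $\mathbb{Z}_2 = \{\pm 1\}$-action, not an $S^1$-action. Second, and more seriously, an $S^1$-equivariant equivalence controls the $H$-fixed points only for closed subgroups $H \subseteq S^1$; the subgroup $\langle j \rangle$ lies in the other component of $O(2)$, so the hypothesis says nothing about whether $\mathcal{T}^{O(2)}$ induces an equivalence on $\langle j\rangle$-fixed points. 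This is precisely the delicate point that the cited proof in \cite{KPT:2024} (which the present paper invokes via \cite[Theorem 4.7]{KPT:2024}, not Theorem 1.3 as you write) must and does address, by a more careful argument than simply ``restrict to fixed points.'' You flag two other subtleties (grading conventions and the $j$-action on the central $\mathrm{spin}^c$ structures), but not this one, which is the actual crux. Until this step is justified, the proof is incomplete.

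Your third step, identifying the degree of the bottom cell with $\tfrac{1}{16}(c_1(\mathfrak{s}_1)^2 - \sigma(W_\Gamma))$, is consistent with the grading normalization in the real theory, but note that in the paper this formula is extracted not by a direct index computation on a single cell, but by following the bookkeeping in the proof of \cite[Theorem 4.7]{KPT:2024}, which already incorporates the almost $I$-invariance and the carrying hypothesis on $\gamma$. So the second and third steps are entangled in the actual argument, whereas your write-up treats them as independent.
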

\begin{proof}
    This can be shown easily by following the proof of \cite[Theorem 4.7]{KPT:2024}.
\end{proof}

We can now prove \Cref{thm: higher sphericity of torus knots}.
\begin{proof}[Proof of \Cref{thm: higher sphericity of torus knots}]
Consider the plumbing graph $\Gamma_{k,m,q}$ introduced in \Cref{subsec: brieskorn k_m_q}. Recall that we have constructed a smooth $\mathbb{Z}_{2^k} = \langle \tau \rangle$-action on the associated plumbed 4-manifold $W_{\Gamma_{k,m,q}}$, such that the boundary $\partial W_{\Gamma_{k,m,q}}$ is $\mathbb{Z}_{2^k}$-equivariantly diffeomorphic to $\Sigma_{2^k}(T_{2^k \cdot m, q})$, where $\mathbb{Z}_{2^k}$ acts by deck transformations.
Denote by $\mathfrak{s}$ the unique spin structure on $\partial W_{\Gamma_{k,m,q}}$, or equivalently, the unique self-conjugate $\mathrm{spin}^c$ structure.

   We claim that there exists an almost $I$-invariant path of $\mathrm{spin}^c$ structures on $W_{\Gamma_{k,m,q}}$ that carries the lattice homology of $(\Gamma_{k,m,q}, \mathfrak{s})$. To prove this claim, we follow the argument in~\cite[Lemma 4.5]{KPT:2024}; the only point that needs to be checked is that the spherical Wu class $\mathrm{Wu}(\Gamma_{k,m,q}, \mathfrak{s})$ for the plumbing graph $\Gamma_{k,m,q}$, with respect to the $\mathrm{spin}^c$ structure $\mathfrak{s}$, can be written as
\[
\mathrm{Wu}(\Gamma_{k,m,q}, \mathfrak{s}) = \sum_{S \in \mathcal{S}} \operatorname{PD}[S],
\]
for some finite collection $\mathcal{S}$ of pairwise disjoint, setwise $\mathbb{Z}_{2^k}$-fixed, smoothly embedded spheres in $W_{\Gamma_{k,m,q}}$, such that $\mathbb{Z}_{2^k}$ acts on each sphere in $\mathcal{S}$ either trivially or by rotation.
To see this, recall that the Wu class can be expressed as
\[
\mathrm{Wu}(\Gamma_{k,m,q}, \mathfrak{s}) = \sum_{v \in V} \operatorname{PD}[S_v],
\]
where $V$ is a set of zero-sections of disk bundles associated to certain nodes of $\Gamma_{k,m,q}$, such that no two elements of $V$ are adjacent. We define a $\mathbb{Z}_{2^k}$-action on the set of nodes of $\Gamma_{k,m,q}$ by
\[
\tau(a_i) = a_i, \qquad \tau(c) = c, \qquad \tau(b_{j,\gamma}) = b_{j,\gamma+1},
\]
so that $\tau(S_v) = S_{\tau(v)}$ for every node $v$ of $\Gamma_{k,m,q}$. Since the spin structure $\mathfrak{s}$ is $\mathbb{Z}_{2^k}$-invariant, the set $V$ must also be setwise $\tau$-invariant; that is, a node $v$ belongs to $V$ if and only if $\tau(v)\in V$.
We may thus write
\[
V = V_{a,c} \sqcup V_b \sqcup \tau V_b \sqcup \cdots \sqcup \tau^{2^k - 1} V_b,
\]
where
\[
V_{a,c} = \{ v \in V \mid S_v = S_{a_i} \text{ for some } i \text{ or } S_v = S_c \}, \qquad
V_b = \{ v \in V \mid S_v = S_{b_j,0} \text{ for some } j \}.
\] We now divide into two cases.

   \textbf{Case 1: $c \in V$ and $V_b \neq \emptyset$.} Let $N_b = \{j_1, \dots, j_t\}$, with $0 < j_1 < \cdots < j_t \leq s$, be the set of integers such that
\[
V_b = \{ \text{nodes } v \text{ of } \Gamma_{k,m,q} \mid S_v = S_{b_j} \text{ for some } j \in N_b \}.
\]
For each $u = 1, \dots, t-1$, choose a smooth path $\gamma_u$ from the south pole of $S_{b_{j_u}}$ to the north pole of $S_{b_{j_{u+1}}}$. Also, choose a smooth path $\gamma$ from the north pole of $S_{b_{j_1}}$ to a point on $S_c$ that is neither the north pole nor the south pole. 
We perturb these paths so that the following set of paths are pairwise disjoint and do not intersect any of the spheres $S_{a_1}, \dots, S_{a_r}$:
\[
\gamma,\, \tau\gamma,\, \dots,\, \tau^{2^k - 1} \gamma,\quad \gamma_1,\, \tau\gamma_1,\, \dots,\, \tau^{2^k - 1} \gamma_1,\quad \dots,\quad \gamma_{t-1},\, \tau\gamma_{t-1},\, \dots,\, \tau^{2^k - 1} \gamma_{t-1}.
\]
We now tube the following pairwise disjoint smoothly embedded spheres along the paths above:
\[
S_c,\, S_{b_{j_1}},\, \tau S_{b_{j_1}},\, \dots,\, \tau^{2^k - 1} S_{b_{j_1}},\, \dots,\, S_{b_{j_t}},\, \tau S_{b_{j_t}},\, \dots,\, \tau^{2^k - 1} S_{b_{j_t}}.
\]
This yields a smoothly embedded sphere $S$ in $W_{\Gamma_{k,m,q}}$ that is disjoint from the spheres $S_{a_1}, \dots, S_{a_r}$, is setwise fixed by the $\mathbb{Z}_{2^k}$-action, and on which $\mathbb{Z}_{2^k}$ acts by rotation. Since we have
\[
\mathrm{Wu}(\Gamma_{k,m,q}, \mathfrak{s}) = [S] + \sum_{v \in V_{a,c} \setminus \{c\}} \operatorname{PD}[S_v],
\]
the claim is proven in this case.

\textbf{Case 2: $c \notin V$ and $V_b \neq \emptyset$.} The argument proceeds similarly to Case 1, except that instead of taking the path $\gamma$ from the north pole of $S_{b_{j_1}}$ to a non-polar point on $S_c$, we introduce a small auxiliary sphere $S^0_c \subset W_{\Gamma_{k,m,q}}$ centered at the south pole of $S_c$. We then choose $\gamma$ to be a smooth path from the north pole of $S_{b_{j_1}}$ to a point on $S^0_c$.
This construction again yields a smoothly embedded sphere $S \subset W_{\Gamma_{k,m,q}}$ that is setwise fixed under the $\mathbb{Z}_{2^k}$-action, and on which $\mathbb{Z}_{2^k}$ acts by rotation. Moreover, we have
\[
\mathrm{Wu}(\Gamma_{k,m,q}, \mathfrak{s}) = [S] + \sum_{v \in V_a} \operatorname{PD}[S_v],
\]
so the claim also holds in this case.

    \textbf{Case 3: $V_b = \emptyset$.} In this case, for every $v \in V$, the sphere $S_v$ is already setwise $\mathbb{Z}_{2^k}$-fixed, and the induced action of $\mathbb{Z}_{2^k}$ on $S_v$ is either trivial or by rotation. Hence the claim also holds in this case.

Now, using the claim just proven, we can choose an almost $I$-invariant path $\gamma = \{ \dots, \mathfrak{s}_{-1}, \mathfrak{s}_1, \dots \}$ of $\mathrm{spin}^c$ structures on $W_{\Gamma_{k,m,q}}$ which carries the lattice homology of $(\Gamma_{k,m,q}, \mathfrak{s})$. Therefore, the assumptions of \Cref{o2homotopy} are satisfied, and \Cref{cor: froyshov is NS} gives
\[
\delta^{(k)}(T_{2^k \cdot m, q}) = \frac{c_1(\mathfrak{s}_1)^2 - \sigma(W_{\Gamma_{k,m,q}})}{16}.
\]
From the discussion in \cite[Section 6.1]{DSS2023}, and from the construction of an almost $I$-invariant path from an almost $J$-invariant path (as in the proof of \cite[Lemma 4.5]{KPT:2024}), we know that $\mathfrak{s}_1$ lies in the \emph{$J$-invariant lattice cube}; that is,
\[
c_1(\mathfrak{s}_1) = \sum_{v \in V} \lambda_v [S_v], \qquad \lambda_v \in \{1, -1\}.
\]
Since no two nodes in $V$ are adjacent, we compute
\[
c_1(\mathfrak{s}_1)^2 = \sum_{v \in V} \lambda_v^2 [S_v]^2 = \sum_{v \in V} [S_v]^2 = \mathrm{Wu}(\Gamma_{k,m,q}, \mathfrak{s})^2.
\]
Thus, we deduce that
\[
\delta^{(k)}(T_{2^k \cdot m, q}) = \frac{\mathrm{Wu}(\Gamma_{k,m,q}, \mathfrak{s})^2 - \sigma(W_{\Gamma_{k,m,q}})}{16} = -\frac{1}{2} \bar\mu(W_{\Gamma_{k,m,q}}, \mathfrak{s}) = -\frac{1}{2} \bar\mu(\Sigma(2^k, 2^k \cdot m, q)).
\]
On the other hand, it follows directly from \Cref{sphericality} that
\[
SWF_R(\Sigma_{2^k}(T_{2^k \cdot m, q}), \tau^{2^{k-1}}) = \left( \mathbb{C}^{-\alpha} \right)^+
\]
for some $\alpha \in \mathbb{Q}$, which implies that
\[
\kappa^{(k)}_R(T_{2^k \cdot m, q}) = \delta^{(k)}(T_{2^k \cdot m, q}) = \alpha.
\]
Therefore, we conclude that
\[
\kappa^{(k)}_R(T_{2^k \cdot m, q}) = -\frac{1}{2} \bar\mu(\Sigma(2^k, 2^k \cdot m, q)),
\]
as desired.
\end{proof}

\section{Proof of the main theorem}
To prove the main theorem, we begin with the following elementary observation.

\begin{lem} \label{lem: linear combination of spherical knots}
Let $k$ be a positive integer, and let $K_1, \dots, K_n$ be $k$-strongly spherical knots. Then, for any integers $c_1, \dots, c_n$, the knot
\[
K = c_1 K_1 \# \cdots \# c_n K_n
\]
is $k$-strongly spherical and satisfies
\[
\kappa_R^{(k)}(K) = \sum_{i=1}^n c_i \kappa_R^{(k)}(K_i).
\]
\end{lem}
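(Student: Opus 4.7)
The plan is to reduce the statement to two structural properties of the real Seiberg--Witten Floer homotopy type, together with representation-sphere arithmetic in $\mathfrak{C}_G$.

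First, I would establish an equivariant connected sum formula. For two knots $K, K' \subset S^3$, taking the $2^k$-fold branched cover along a connected-summing arc yields an equivariant diffeomorphism
\[
\bigl(\Sigma_{2^k}(K \# K'),\, \tau^{2^{k-1}}\bigr) \;\cong\; \bigl(\Sigma_{2^k}(K),\, \tau^{2^{k-1}}\bigr) \,\#\, \bigl(\Sigma_{2^k}(K'),\, \tau^{2^{k-1}}\bigr),
\]
where the right-hand side denotes the equivariant connected sum along the preimages of a point on the arc. Applying the Manolescu-style smash product theorem for the real Floer homotopy type in $\mathfrak{C}_G$ (a $\mathbb{Z}_4$-equivariant refinement of \cite{Ma03}; the analogous statement for $SWF_R$ is contained in the framework of \cite{KMT21, KMT:2023}), this gives
\[
SWF_R^{(k)}(K \# K') \;\simeq\; SWF_R^{(k)}(K) \wedge SWF_R^{(k)}(K')
\]
in $\mathfrak{C}_G$.

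Second, I would use orientation reversal. The concordance inverse $-K$ produces $-\Sigma_{2^k}(K)$ carrying the same involution $\tau^{2^{k-1}}$, and the corresponding real Floer homotopy type is Spanier--Whitehead dual:
\[
SWF_R^{(k)}(-K) \;\simeq\; D\bigl(SWF_R^{(k)}(K)\bigr).
\]
Combined with the standard identifications $(\mathbb{C}^r)^+ \wedge (\mathbb{C}^s)^+ \simeq (\mathbb{C}^{r+s})^+$ and $D\bigl((\mathbb{C}^r)^+\bigr) \simeq (\mathbb{C}^{-r})^+$ in $\mathfrak{C}_G$, this shows that if $K$ is $k$-strongly spherical then so is $-K$, with $\kappa_R^{(k)}(-K) = -\kappa_R^{(k)}(K)$.

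With these ingredients, the lemma follows immediately. For each $i$, set $K_i' = \operatorname{sign}(c_i) K_i$, which is $k$-strongly spherical with $\kappa_R^{(k)}(K_i') = \operatorname{sign}(c_i)\kappa_R^{(k)}(K_i)$; then $K = |c_1| K_1' \# \cdots \# |c_n| K_n'$, and iterating the connected sum formula yields
\[
SWF_R^{(k)}(K) \;\simeq\; \Bigl(\mathbb{C}^{-\sum_i c_i \kappa_R^{(k)}(K_i)}\Bigr)^+,
\]
which simultaneously shows that $K$ is $k$-strongly spherical and computes $\kappa_R^{(k)}(K) = \sum_{i=1}^n c_i\,\kappa_R^{(k)}(K_i)$.

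The main obstacle is verifying the connected sum and duality statements in the precise form needed within the $\mathfrak{C}_G$ category, keeping careful track of the $\mathbb{Z}_4$-action generated jointly by $\tau^{2^{k-1}}$ and $j \in \Pin(2)$, and of the grading shifts coming from the Dirac indices on the bounding 4-manifolds used to define $SWF_R^{(k)}$. These shifts are additive under connected sum and sign-reversing under orientation reversal, so they combine consistently with the representation-sphere arithmetic above; in practice, both facts are direct $\mathbb{Z}_4$-equivariant analogues of results already in \cite{KMT21, KMT:2023}.
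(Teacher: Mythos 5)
Your proposal is correct and takes essentially the same route as the paper: the paper likewise reduces to the smash-product/connected-sum formula for $SWF_R^{(k)}$ together with representation-sphere arithmetic in $\mathfrak{C}_G$, writing $SWF_R^{(k)}(K)\simeq\bigwedge_i\bigl(SWF_R^{(k)}(K_i)\bigr)^{\wedge c_i}$ and citing multiplicativity under connected sums. The only difference is cosmetic: you make the Spanier--Whitehead duality step for negative $c_i$ explicit, while the paper absorbs it into the negative smash power.
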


\begin{proof}
Since each $K_i$ is $k$-strongly spherical, we have
\[
SWF_R^{(k)}(K_i) \simeq \left( \mathbb{C}^{-\kappa_R^{(k)}(K_i)} \right)^+ \quad \text{for } i = 1, \dots, n.
\]
Using the multiplicativity of the suspension spectrum under connected sums and wedge powers, we compute:
\[
SWF_R^{(k)}(K) \simeq \bigwedge_{i=1}^n \left(SWF_R^{(k)}(K_i)\right)^{\wedge c_i} \simeq \left( \mathbb{C}^{-\sum_{i=1}^n c_i \kappa_R^{(k)}(K_i)} \right)^+.
\]
This implies that $K$ is $k$-strongly spherical and satisfies the claimed formula for $\kappa_R^{(k)}(K)$.
\end{proof}

We are finally ready to prove the main theorem.

\begin{thm} \label{thm: kappa inequality}
Let $K$ be a knot that can be transformed into a slice knot by applying full negative twists along two disjoint disks,  
where one intersects $K$ algebraically once and the other intersects it algebraically three times.  
An example of such a knot is the figure-eight knot $4_1$.  
Then, for any positive integers $k$, $m$, and $c$, with $m$ odd, we have
\[
\frac{1}{2} \le \kappa^{(k)}_R\left(-cK_{2^k \cdot m,1}\right).
\]
\end{thm}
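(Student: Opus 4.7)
The plan is to apply the stronger (second) inequality in \Cref{thm: higher real 10/8} to the orientation-reversed ambient of the standard cabled ACMPS concordance, so that the ``input'' knot becomes a $k$-strongly spherical torus knot. I first observe that for any knot $J \subset S^{3}$,
\[
\Sigma_{2^{k}}(\bar J) \;=\; \overline{\Sigma_{2^{k}}(J)} \;=\; \Sigma_{2^{k}}(-J)
\]
as oriented $3$-manifolds with involution, since reversing the orientation of a knot does not change its branched double cover. Hence $\kappa_{R}^{(k)}(\bar J) = \kappa_{R}^{(k)}(-J)$, and it suffices to prove $\tfrac12 \le \kappa_{R}^{(k)}(\overline{cK_{2^{k} m,1}})$.

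From the twist hypothesis on $K$, the construction of \cite{ACMPS:2023-1} recalled in \Cref{subsec: homological computation} (see \Cref{fig:figure8}) gives a concordance $S$ of homology class $(1,3)$ from $K$ to the unknot in $2\mathbb{CP}^{2}\setminus(\mathring B^{4}\sqcup\mathring B^{4})$. Its $(2^{k}m,1)$-cable runs from $K_{2^{k}m,1}$ to $-T_{2^{k}m,\, 2^{k}\cdot 10m - 1}$, and a $c$-fold boundary connect sum produces a concordance $\Sigma$ in $X := S^{3}\times I \,\#\, 2c\mathbb{CP}^{2}$ from $cK_{2^{k}m,1}$ to $-cT_{2^{k}m,\, 2^{k}\cdot 10m - 1}$. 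Reversing the orientation of $X$ to $\bar X := S^{3}\times I \,\#\, 2c\overline{\mathbb{CP}^{2}}$ automatically swaps the cobordism direction and mirrors each boundary knot, yielding a concordance $\bar\Sigma$ in $\bar X$ from $cT_{2^{k}m,\, 2^{k}\cdot 10m - 1}$ (using invertibility of torus knots so that $\overline{-cT}=cT$) to $\overline{cK_{2^{k}m,1}}$. The knot $cT_{2^{k}m,\, 2^{k}\cdot 10m - 1}$ is $k$-strongly spherical by \Cref{sphericality} and \Cref{lem: linear combination of spherical knots}, with
\[
\kappa_{R}^{(k)}\bigl(cT_{2^{k}m,\, 2^{k}\cdot 10m - 1}\bigr) = c\Bigl(2^{k-1} + \tfrac{1}{8}\Bigr)
\]
by \Cref{thm: higher sphericity of torus knots} and \Cref{lem:mu-bar-special-case}; the spin hypothesis of \Cref{thm: higher real 10/8} holds because $m$ is odd.

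For the branched-cover homology I cap off the $cK_{2^{k}m,1}$ end of $\Sigma$ with a slice disk in $B^{4}$ for the (always slice) knot $cK_{2^{k}m,1} \# \overline{cK_{2^{k}m,1}}$, producing a properly embedded disk in $B^{4}\,\#\,2c\mathbb{CP}^{2}$ bounding $-cT_{2^{k}m,\, 2^{k}\cdot 10m - 1} \# \overline{cK_{2^{k}m,1}}$, to which \Cref{lem: homological formula} applies directly. The crucial simplification is Litherland's formula $\sigma_{K_{2^{k}m,1}}(\omega) = \sigma_{K}(\omega^{2^{k}m})$: any $n$-th root of unity with $n \mid 2^{k}$ satisfies $\omega^{2^{k}m}=1$, so $\sigma^{(2^{k})}(K_{2^{k}m,1}) = \sigma^{(2^{k-1})}(K_{2^{k}m,1}) = 0$ for every $K$. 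The extra boundary therefore contributes nothing, and the cancellations in the proof of \Cref{lem: homological quantities} pass through verbatim to give $b^{+}(\Sigma_{2^{k}}(\Sigma)) = c(2^{k}+1)$, $b^{+}(\Sigma_{2^{k-1}}(\Sigma)) = c(2^{k-1}+1)$, and $\sigma(\Sigma_{2^{k}}(\Sigma)) = 2c$; orientation-reversal passes these to $b^{+}(\Sigma_{2^{k}}(\bar\Sigma)) = c(2^{k}-1)$, $b^{+}(\Sigma_{2^{k-1}}(\bar\Sigma)) = c(2^{k-1}-1)$, $\sigma(\Sigma_{2^{k}}(\bar\Sigma)) = -2c$, and $\Delta b^{+} = c\cdot 2^{k-1} > 0$. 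Substituting into the second inequality of \Cref{thm: higher real 10/8} gives
\[
\frac{2c}{16} + \frac{1}{2} \;\le\; c\cdot 2^{k-1} + \kappa_{R}^{(k)}\bigl(\overline{cK_{2^{k}m,1}}\bigr) - c\Bigl(2^{k-1} + \frac{1}{8}\Bigr),
\]
which simplifies to $\kappa_{R}^{(k)}(\overline{cK_{2^{k}m,1}}) \ge \tfrac12 + \tfrac c4$ and in particular yields $\tfrac12 \le \kappa_{R}^{(k)}(-cK_{2^{k}m,1})$. The most delicate step is the branched-cover homology computation, since \Cref{lem: homological formula} is stated only for surfaces with a single knot boundary: the slice-disk capping reduces to the single-boundary case, and the universal vanishing of the Levine--Tristram cable signatures via Litherland is what aligns the constants so that the inequality yields exactly $\tfrac12$.
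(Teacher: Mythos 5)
Your argument has a genuine sign error at the orientation-reversal step, and it is the step on which everything else hinges. Reversing the orientation of the ambient cobordism $X$ to $\bar X$ does swap the incoming and outgoing ends, but it does \emph{not} mirror the boundary knots. Concretely, if $\Sigma$ is a concordance from $J_0$ (at $t=0$) to $J_1$ (at $t=1$) in $X = (S^3\times I)\# 2c\mathbb{CP}^2$, then $\bar\Sigma \subset \bar X$ is a concordance from $J_1$ to $J_0$ in $(S^3\times I)\# 2c\overline{\mathbb{CP}^2}$: the incoming boundary component of $\bar X$ is $S^3\times\{1\}$ equipped with the \emph{same} standard orientation, so the knot sitting there is still $J_1$, not $\bar J_1$. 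One can check this directly on branched covers: $\partial\,\Sigma_{2^k}(\Sigma) = -\Sigma_{2^k}(J_0)\sqcup\Sigma_{2^k}(J_1)$, hence $\partial\,\overline{\Sigma_{2^k}(\Sigma)} = \Sigma_{2^k}(J_0)\sqcup -\Sigma_{2^k}(J_1)$, so $\overline{\Sigma_{2^k}(\Sigma)}$ is a cobordism from $\Sigma_{2^k}(J_1)$ to $\Sigma_{2^k}(J_0)$ (no mirroring). Applying \Cref{thm: higher real 10/8} to $(\bar X,\bar\Sigma)$ therefore yields a bound involving $\kappa_R^{(k)}(cK_{p,1}) - \kappa_R^{(k)}(-cT_{p,10p-1})$, not $\kappa_R^{(k)}(\overline{cK_{p,1}}) - \kappa_R^{(k)}(cT_{p,10p-1})$ as you wrote. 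This is the wrong target (the theorem is about $-cK_{p,1}$, whose branched cover is $\overline{\Sigma_{2^k}(cK_{p,1})}$, not $\Sigma_{2^k}(cK_{p,1})$), and the wrong source ($-cT_{p,10p-1}$, whose $k$-strong sphericality is not established by \Cref{thm: higher sphericity of torus knots}, which is stated only for positive torus knots $T_{2^k p,q}$).

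The paper's move is different and is the one you actually want: instead of reversing the ambient orientation, apply the standard concordance-symmetry construction (an orientation-\emph{preserving} self-diffeomorphism of $X$ that flips the $I$-direction and simultaneously reflects $S^3$). This produces a concordance from $cT_{p,10p-1}$ to $-cK_{p,1}$ in the \emph{same} $2c\mathbb{CP}^2$, so the branched cover is orientation-preservingly diffeomorphic to the original one, with $\sigma(\Sigma_{2^k}(S)) = +2c$ and $b^+(\Sigma_{2^k}(S)) = c(2^k+1)$. You have effectively taken the $\kappa$-terms from this construction but the $\sigma$ and $b^+$ terms from the orientation-reversed one, and the two do not belong to the same application of the theorem. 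A good internal check is that your final conclusion $\kappa_R^{(k)}(-cK_{2^k m,1}) \ge \tfrac12 + \tfrac c4$ is strictly stronger than the paper's $\tfrac12$ while using only the same inputs; a factor-of-$c$ free improvement should have raised a flag. Two smaller remarks: the passage from $b^+$ to $b^-$ requires $\sigma(\Sigma_{2^{k-1}}(\Sigma)) = 2c$, which you use but never establish (it does hold and follows by the same cancellation as in \Cref{lem: homological quantities}); and the slice-disk capping is more elaborate than necessary, since the two-ended signature formula handles concordances directly.
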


\begin{proof}
Write $p = 2^k \cdot m$. By assumption, there exists a smooth concordance $S$, of homology class $$(p, 3p, \dots, p, 3p)$$ in $2c \mathbb{CP}^2$, from $c K_{p,1}$ to $-c T_{p,10p-1}$. Equivalently, there exists a smooth concordance, again denoted by $S$, from $cT_{p,10p-1}$ to $-c K_{p,1}$ with the same homology class in $2c \mathbb{CP}^2$.

By the same argument as in the proof of \Cref{lem: homological quantities}, we have
\[
b^+(\Sigma_{2^k}(S)) = c(2^k + 1), \qquad b^+(\Sigma_{2^{k-1}}(S)) = c(2^{k-1} + 1), \qquad \sigma(\Sigma_{2^k}(S)) = 2c,
\]
and therefore
\[
b^+(\Sigma_{2^k}(S)) - b^+(\Sigma_{2^{k-1}}(S)) = c \cdot 2^{k-1} > 0.
\]
Furthermore, $cT_{p,10p-1}$ is $k$-strongly spherical by \Cref{thm: higher sphericity of torus knots}. Hence we obtain the following inequality from \Cref{thm: higher real 10/8}:
\begin{equation} \label{eq:finalinequal}
    -\frac{\sigma(\Sigma_{2^k}(S))}{16} + \frac{1}{2} \le b^+(\Sigma_{2^k}(S)) - b^+(\Sigma_{2^{k-1}}(S)) + \kappa_R^{(k)}(-c K_{p,1}) - \kappa_R^{(k)}(c T_{p,10p-1}).
\end{equation}

In addition, using \Cref{lem:mu-bar-special-case}, \Cref{thm: higher sphericity of torus knots}, and \Cref{lem: linear combination of spherical knots}, we compute
\[
\kappa_R^{(k)}(c T_{p,10p-1}) = c\kappa_R^{(k)}(T_{2^k m,\, 10 \cdot 2^k m - 1}) = -\frac{c}{2} \bar{\mu}(\Sigma(2^k,\, 2^k m,\, 2^k \cdot 10m - 1)) = -\frac{c}{2} \left( -2^k - \frac{1}{4} \right).
\]
Substituting these into \eqref{eq:finalinequal}, we obtain
\[
-\frac{c}{8} + \frac{1}{2} \le c \cdot 2^{k-1} + \kappa_R^{(k)}(-c K_{p,1}) + \frac{c}{2} \left( -2^k - \frac{1}{4} \right).
\]
The theorem follows.
\end{proof}

\begin{proof}[Proof of \Cref{thm:main}]
    Suppose that $(4_1)_{p,q}$ has finite order in the smooth knot concordance group $\mathcal{C}$, where $p$ and $q$ are nonzero coprime integers and $p \neq \pm 1$ (i.e., the $(p,q)$-cable is nontrivial). Since $-(4_1)_{p,q} = (4_1)_{p,-q}$, we may assume without loss of generality that $p, q > 0$.  
The formula for signatures of cable knots given by Litherland~\cite[Theorem 2]{Litherland:1979} implies that the Levine–Tristram signature function of $(4_1)_{p,q}$ coincides with that of the torus knot $T_{p,q}$, so we have that $q = 1$~\cite[Lemma 1]{Litherland:1979}.  
It is known from \cite[Theorem 1.3]{HKPS:2022-1} that $(4_1)_{p,1}$ has infinite order in $\mathcal{C}$ whenever $p$ is odd, so we deduce that $p$ must be even.  
However, it follows from \Cref{thm: kappa inequality} that $(4_1)_{p,1}$ has infinite order in $\mathcal{C}$ for every nonzero even integer $p$, yielding a contradiction.  
The theorem follows.\end{proof}

\bibliographystyle{alpha}
\bibliography{knotbib}

\end{document}